\DeclareMathOperator{\arccosh}{arccosh}
\DeclareMathOperator{\sff}{II} % second fundamental form
\DeclareMathOperator{\spn}{span}
\theoremstyle{plain}
\newtheorem{thm}{Theorem}%[section]
\newtheorem{lem}{Lemma}%[section]
\newtheorem{prop}{Proposition}%[section]
\newtheorem{cor}{Corollary}%[section]
\theoremstyle{definition}
\newtheorem{defn}{Definition}%[section]
\theoremstyle{remark}
\newtheorem*{rmk}{Remark}
\begin{document}
\title{Hilbert's Theorem, via moving frames}
\author{William D. Dunbar}
\date{\today}
\maketitle

\begin{abstract}
We present a proof that the hyperbolic plane cannot be isometrically immersed in Euclidean $3$-space by a $C^\infty$ map.  Ideas from many topics in (essentially) undergraduate mathematics are applied;
%(e.g., linear algebra, differential geometry, ordinary differential equations, topology) 
the use of moving frames and connection forms to express the geometry simplifies the outline of the proof, compared to, say, using coordinate patches and Christoffel symbols.  The key transition is from expressions in terms of the principal directions on the immersed surface (which give access to the Gaussian curvature) to expressions in terms of the asymptotic directions (which yield a coordinate system and give access to surface area).
\end{abstract}

\section{Outline of the proof} \label{Sec:bullets}

\begin{itemize} 
\item Assume for simplicity that $\phi: \mathbb{H}^2 \rightarrow \mathbb{R}^3$ is a $C^\infty$ isometric embedding (immersions are no more difficult).  In this section, ``the surface'' will refer to the image of $\phi$.  
\item Construct an orthonormal frame field $\{e_1,e_2\}$ on $\mathbb{H}^2$ which is mapped by $\phi_*$ to unit vectors in the principal directions on the surface (maximizing and minimizing normal curvature).
\item Construct a frame field $\{ E_1,E_2 \}$ on $\mathbb{H}^2$ which is mapped by $\phi_*$ to linearly independent unit vectors in asymptotic directions (where the normal curvature equals zero).
\item Express the connection $1$-form $\omega_1^2$ (for $\{e_1,e_2\}$) first in terms of the $1$-forms dual to $\{e_1,e_2\}$ and the principal curvatures [Lemma \ref{L:connprinequ}], then in terms of the same forms and the angle $\alpha$ between either of the asymptotic directions and $e_1$ [Lemma \ref{L:connalpha}], and finally change basis to the $1$-forms dual to $\{E_1,E_2\}$  [equation \eqref{Equ:conneta}].  
(The Gaussian curvature on $\mathbb{H}^2$ being identically equal to $-1$ is used in the proof of Lemma \ref{L:connalpha}.)
\item Show that the dual $1$-forms $\{\eta^1, \eta^2\}$ for $\{E_1,E_2\}$ are closed forms, hence exact [Lemma \ref{L:closed}]; furthermore, construct a (global!) coordinate chart mapping $P \in \mathbb{H}^2$ to
$F(P) = (u^1(P),u^2(P)) \in \{ (x^1,x^2) \} = \mathbb{R}^2$, such that $\frac{\partial}{\partial u^1} = E_1$ and $\frac{\partial}{\partial u^2} = E_2$ [Proposition \ref{P:coords}].
\item Define $\Theta$ to be the (oriented) angle between $E_1$ and $E_2$; hence,  
%$\langle E_1, E_2 \rangle = \cos\Theta$, and
$\sin\Theta$ is the area distortion factor when mapping from the $x^1x^2$-plane to $\mathbb{H}^2$.
Show that %(on $\mathbb{H}^2$) 
$\sin\Theta = \frac{\partial^2 \Theta}{\partial u^1 \partial u^2}$
[equation \eqref{Equ:KeyPDEinD}].
% $-K \sin\Theta = \frac{\partial^2 \Theta}{\partial x^1 \partial x^2}$, where $K$ is the Gaussian curvature
\item Calculate the area in $\mathbb{H}^2$ corresponding to $[-a,a]\times [-a,a]$ in the $x^1x^2$-plane, and show that as $a \to \infty$, area is bounded above by $2\pi$ [equation \eqref{Equ:finitearea}] \dots
but also calculate the area of $\mathbb{H}^2$ by other means to show that it is infinite [equation \eqref{Equ:infinitearea}].  This is a contradiction, so $\phi$ cannot exist.
\end{itemize}

This approach to Hilbert's theorem was inspired by the argument given by Rubens Le\~{a}o in the Appendix of
\cite{doC1983}.
Citations will be supplied for equations that come from undergraduate differential geometry and/or from tensor analysis, but proofs will usually be omitted.  For differential geometry, 
\cite{ON2006} and
\cite[Chapter 3, Section 3]{Sh2018} use moving frames; 
\cite{MP1977} and
\cite{doC2016}
may also be helpful.
Basic facts about differential forms in $\mathbb{R}^n$ (exterior differentiation, wedge product, pullback) can be found in
\cite[Chapter 1]{doC1994}
and in
\cite[Chapter 7]{SI1979}.

\section{Proof of the main theorem} \label{Sec:prf}

\begin{thm} \label{T:Hilbert}
There does not exist a $C^\infty$ isometric immersion of the hyperbolic plane into Euclidean $3$-space.
\end{thm}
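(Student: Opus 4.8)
The plan is to argue by contradiction: assuming a $C^\infty$ isometric immersion $\phi:\mathbb{H}^2\to\mathbb{R}^3$ exists, I would produce two incompatible estimates for the area of $\mathbb{H}^2$. The structural groundwork laid out in the outline does most of the work, so the proof proper is mainly a matter of assembling it. The crucial preliminary observation is that since the Gaussian curvature is constant and equal to $-1$, the product of the principal curvatures is negative at every point; hence the second fundamental form is indefinite everywhere and there are exactly two distinct asymptotic directions at each point. This is what makes the asymptotic frame field $\{E_1,E_2\}$ globally well-defined and lets Proposition \ref{P:coords} furnish a global chart $F=(u^1,u^2):\mathbb{H}^2\to\mathbb{R}^2$ with $\partial/\partial u^1=E_1$ and $\partial/\partial u^2=E_2$.

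The engine of the contradiction is the key identity \eqref{Equ:KeyPDEinD}, $\partial^2\Theta/\partial u^1\partial u^2=\sin\Theta$, where $\Theta$ is the oriented angle between the asymptotic directions. Since $E_1$ and $E_2$ are unit vectors meeting at angle $\Theta$, the parallelogram they span has area $\sin\Theta$, so the hyperbolic area form pulls back to $\sin\Theta\,du^1\wedge du^2$ and the area of $F^{-1}\big([-a,a]\times[-a,a]\big)$ equals $\int_{-a}^{a}\!\int_{-a}^{a}\sin\Theta\,du^1\,du^2$. I would then substitute the PDE and integrate the resulting mixed second partial twice by the fundamental theorem of calculus, which telescopes the double integral down to its values at the four corners, namely $\Theta(a,a)-\Theta(a,-a)-\Theta(-a,a)+\Theta(-a,-a)$.

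To close the argument I need the range bound $0<\Theta<\pi$, which holds because $E_1$ and $E_2$ are linearly independent and never reverse orientation, so $\Theta$ can neither vanish nor reach $\pi$. Consequently the two positive corner terms are each less than $\pi$ and the two subtracted terms are each positive, forcing the area of every square to be strictly below $2\pi$; letting $a\to\infty$ gives the finite bound \eqref{Equ:finitearea}. Against this I would set a direct computation of the area of the hyperbolic plane --- for instance in geodesic polar coordinates, where the area element is comparable to $\sinh r\,dr\,d\theta$ and integrates to $+\infty$, yielding \eqref{Equ:infinitearea}. A surface cannot have both finite and infinite area, so $\phi$ cannot exist.

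The step I expect to be the genuine obstacle is not this concluding bookkeeping but the globality packaged into Proposition \ref{P:coords}, together with the guarantee that each square $[-a,a]^2$ pulls back to an honest region of $\mathbb{H}^2$ whose area the integral computes. This is precisely where completeness of $\mathbb{H}^2$ is indispensable: local isometric immersions of pieces of the hyperbolic plane do exist (the pseudosphere realizes a portion of it), and the contradiction only materializes once the asymptotic coordinates are known to sweep out all of $\mathbb{R}^2$ while the angle $\Theta$ stays strictly inside $(0,\pi)$ everywhere.
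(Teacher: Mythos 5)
Your proposal is correct and follows essentially the same route as the paper: assume $\phi$ exists, use the asymptotic frame and the global chart of Proposition \ref{P:coords}, derive the sine--Gordon identity \eqref{Equ:KeyPDEinD}, telescope the area integral over $[-a,a]^2$ to four corner values of $\Theta\in(0,\pi)$ to get the bound $2\pi$, and contradict the infinite area of $\mathbb{H}^2$. The only deviation --- computing the infinite area in geodesic polar coordinates via $\sinh r\,dr\,d\theta$ rather than in the Poincar\'e disk as in \eqref{Equ:infinitearea} --- is immaterial, and you correctly identify completeness (used in Proposition \ref{P:globalflow} to make the asymptotic flows global) as the step where the argument genuinely depends on having all of $\mathbb{H}^2$.
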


Before starting the proof, we make a foundational definition and a remark on notation.

\begin{defn}
A \emph{moving frame} on an open subset $U$ of $\mathbb{R}^n$ is an ordered $n$-tuple of smooth (i.e., $C^\infty$) vector fields 
$\{v_1,\dots,v_n\}$, such that the vectors at each point $p \in U$ form an orthonormal basis for the tangent space $T_p(U)$.
\end{defn}

The previous definition agrees with
\cite[page 118]{doC1983}; see also
\cite[page 77]{doC1994} and
\cite[Chapter II, Section 1]{ON2006}.
Elsewhere, such as in 
\cite[page 285]{SII1979}, the vectors in a moving frame need only be linearly independent (and span). 

\begin{rmk}
We will use lower subscripts for contravariant objects such as vectors, and upper subscripts for their components with respect to a basis (as in the preceding definition).  Similarly, we will use upper subscripts for covariant objects such as differential forms, and lower subscripts for their components with respect to a basis.  Summation signs are needed only a few times, and they are \emph{not} omitted, as they would be in ``Einstein summation convention''
\cite[pages 50--51, pages 155--158]{SI1979}.
\end{rmk}
%
% insert option grid for overpic to show (relative) scale for \put commands
\begin{figure} 
\begin{overpic}[scale=0.36]{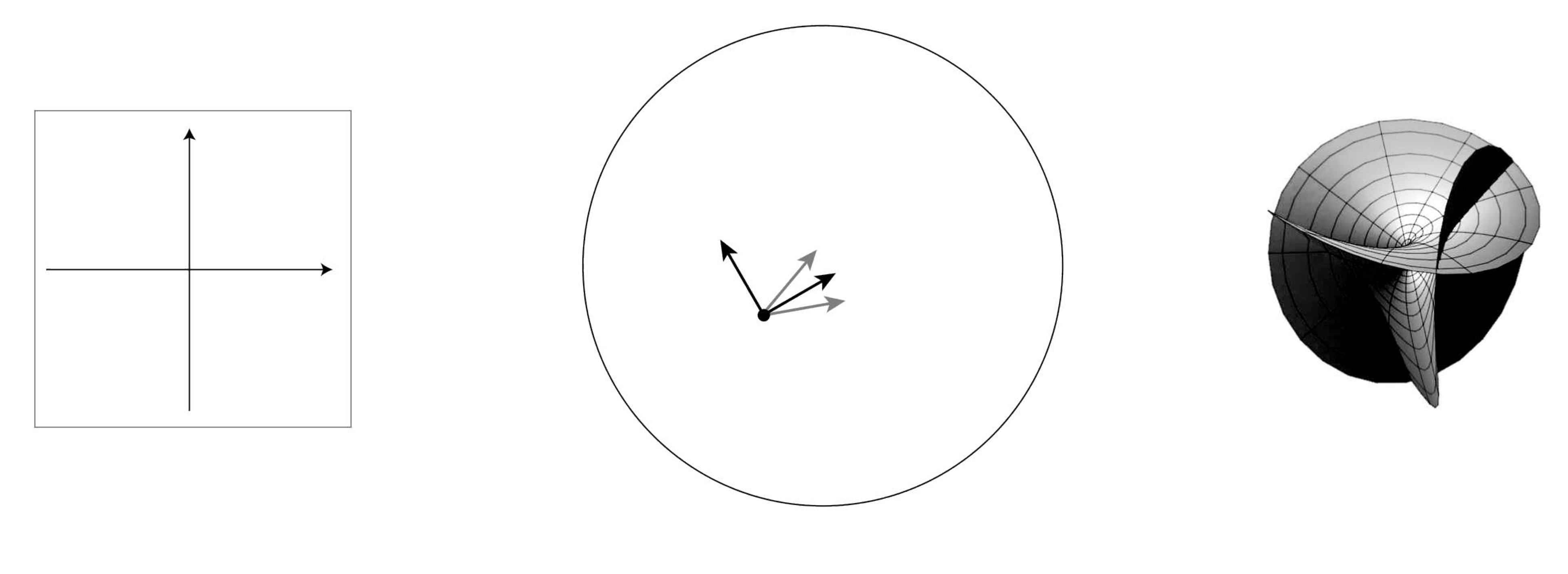} %R4a, R7a
\put(12,4){$\mathbb{R}^2$}
\put(62,4){$\mathbb{D}$}
\put(88,4){$\mathbb{R}^3$}
\put(26.5,18){\LARGE$\overset{F}{\longleftarrow}$}
\put(69.5,18){\LARGE$\overset{\phi}{\longrightarrow}$}
\put(19,20.5){$x^1$}
\put(13.5,25.5){$x^2$}
\put(46,14){$P$}
\put(54.5,19.8){$e_1$}
\put(44.5,22.5){$e_2$}
\put(54.4,15.8){$E_1$}
\put(50.3,22.3){$E_2$}
\end{overpic}
\caption{Overview 
% (the map $F$ is the global coordinate chart mentioned in the Outline)
} \label{F: mapdiagram}
\end{figure}
\begin{proof}[Proof of Theorem \ref{T:Hilbert}] %START
The Poincar\'e disk $\mathbb{D}$ will represent the hyperbolic plane; define $O :=(0,0) \in \mathbb{D}$ (see the Appendix for the Riemannian metric and other details).  
Suppose that $\phi: \mathbb{D} \longrightarrow \mathbb{R}^3$ is a $C^\infty$ isometric immersion.

\subsection{Construct vector fields and $1$-forms on $\mathbb{D}$} \label{SS:setup} 

For all $P \in \mathbb{D}$, there is a positive number $\epsilon_P$, such that $\phi$ is an embedding, when restricted to the open disk $U_P$ of radius $\epsilon_P$ in the hyperbolic metric.
The tangent space to the surface $\phi(U_P) =: \bar{U}_P$ at $\phi(P) =: \bar{P}$ can inherit an orientation from $T_P(\mathbb{D})$, via $(\phi_*)_P$, so there is a preferred unit normal vector at $\bar{P}$, which can be continuously (indeed, smoothly) extended to a unit normal vector field
$\bar{e}_3: \bar{U}_P \rightarrow S^2$.

At every point $Q \in U_P$, the map $(-d\bar{e}_3)_{\bar{Q}} : T_{\bar{Q}}(\bar{U}_P) \rightarrow T_{\bar{e}_3(\bar{Q})}S^2$, is a self-adjoint linear transformation of 
$T_{\bar{Q}}(\bar{U}_P) = \phi_*(T_Q(\mathbb{D}))$
\cite[page 142]{doC2016};
note that
$T_{\bar{e}_3(\bar{Q})}S^2$ and $T_{\bar{Q}}(\bar{U}_P)$ are both names for the orthogonal
complement in $\mathbb{R}^3$ of $\spn(\bar{e}_3(\bar{Q}))$.
The eigenvalues are equal to the principal curvatures (let $\bar{\kappa}_1(\bar{Q})$ denote the positive eigenvalue,  and $\bar{\kappa}_2(\bar{Q})$ the negative eigenvalue), and the corresponding (one-dimensional, orthogonal) eigenspaces 
$\bar{L}_1(\bar{Q}), \bar{L}_2(\bar{Q})$ give the principal directions  
\cite[page 146]{doC2016}.
At this stage, we are using the fact that the Gaussian curvature (product of the principal curvatures) is negative at every point of $\bar{U}_P$, but \emph{not} (yet) the fact that it is constant and equal to $-1$.

Via $(\phi |_{U_P})_*^{-1}$, we can transfer these eigenspaces back to form two smooth line fields on each $U_P$.  Whenever $R \in U_P \cap U_Q$, the line fields for $U_P$ and $U_Q$ will agree at $R$, since they both describe the extrinsic geometry of $\phi(U_P \cap U_Q)$ at $\phi(R)$.  Hence we can merge all of the ``local'' line fields into two ``global'' line fields $L_1,L_2$ on $\mathbb{D}$.  There is a two-sheeted covering space of $\mathbb{D}$, consisting of all the unit vectors in $L_1$, and since $\mathbb{D}$ is simply-connected, this covering space is not connected.  In other words, there are two unit vector fields $\pm e_1$ on $\mathbb{D}$ which everywhere point along $L_1$.  We define $e_2$ to be the result of rotating $e_1$ a quarter-turn counter-clockwise (i.e., $+\pi/2$ radians).

This allows us to define $\bar{e}_1 := \phi_*(e_1)$ and $\bar{e}_2 := \phi_*(e_2)$ on each $\bar{U}_P$, so we now have a (positively oriented) moving frame $\{ \bar{e}_1, \bar{e}_2, \bar{e}_3 \}$ on $\bar{U}_P$.
As in
\cite[page 82]{doC1994},
we can extend this moving frame to a small open subset $W_P$ of $\mathbb{R}^3$ by displacing frames normally by a sufficiently small amount (choose $\nu_P > 0$ such that $U_P \times (-\nu_P,\nu_P) \rightarrow W_P$ by $(Q,t) \mapsto \bar{Q} + t \cdot [\bar{e}_3]_{\bar{Q}}$ is a diffeomorphism).  See Figure \ref{F:detail} for an illustration.

Since $d\bar{e}_1, d\bar{e}_2, d\bar{e}_3$ are linear transformations on $\mathbb{R}^3$ at every point of $W_P$
(in other words, they are $\mathbb{R}^3$-valued $1$-forms, as defined in
\cite[page 546]{SI1979}, 
while $\bar{e}_1, \bar{e}_2, \bar{e}_3$ are $\mathbb{R}^3$-valued $0$-forms), we can define the 
\emph{connection $1$-forms} $\bar{\omega}^k_j \ (1 \le j,k \le 3)$ by

\begin{equation} \label{Equ:conndef}
d\bar{e}_j(v) = \sum_{k=1}^3 \bar{\omega}^k_j(v)\  \bar{e}_k, 
\end{equation}
\par\noindent
for all vector fields $v$ on $W_P$
\cite[page 78]{doC1994}
\cite[page 287]{SII1979}.
The notation $\bar{\omega}_j^k$ follows
\cite[page 287]{SII1979} and corresponds to $\omega_{jk}$ in
\cite [page 78]{doC1994}.
% Shifrin, page 102, agrees with do Carmo
\par\noindent
On each $W_P$ we can also define $1$-forms $\bar{\theta}^1, \bar{\theta}^2, \bar{\theta}^3$ 
dual to the frame, that is,
\begin{equation} \label{Equ:edual}
\bar{\theta}^i(\bar{e}_j) = \delta^i_j; \quad 1 \le i,j \le 3
\end{equation} 
(using the Kronecker delta).
These $1$-forms on $W_P$ are related to each other by the Cartan structure equations, which follow.

\begin{prop} \label{T:cartanstr}
For all $1 \le i,j \le 3$, and for any moving frame on an open subset of $\mathbb{R}^3$, with connection forms and dual forms defined as in equations \eqref{Equ:conndef} and \eqref{Equ:edual},
\begin{align}
d\bar{\theta}^i &= -\sum_{k=1}^3 \bar{\omega}^i_k \wedge \bar{\theta}^k \label{Equ:1str}\\
d\bar{\omega}^i_j &= -\sum_{k=1}^3 \bar{\omega}^i_k \wedge \bar{\omega}^k_j \label{Equ:2str}
\end{align}
\end{prop}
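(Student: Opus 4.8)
The plan is to reduce everything to the flat standard frame of $\mathbb{R}^3$. Let $\{a_1,a_2,a_3\}$ be the constant (parallel) orthonormal frame of $\mathbb{R}^3$ with dual forms $dx^1,dx^2,dx^3$, and on $W_P$ write $\bar{e}_j=\sum_l f^l_j\,a_l$, where $f=(f^l_j)$ is a smooth matrix-valued function (orthogonal at each point, since both frames are orthonormal). Because the $a_l$ are constant, $d\bar{e}_j=\sum_l (df^l_j)\,a_l$; comparing this with the defining relation \eqref{Equ:conndef}, namely $d\bar{e}_j=\sum_k\bar{\omega}^k_j\bar{e}_k=\sum_l\big(\sum_k f^l_k\,\bar{\omega}^k_j\big)a_l$, and matching coefficients of $a_l$ gives $df^l_j=\sum_k f^l_k\,\bar{\omega}^k_j$. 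In matrix notation, writing $\omega=(\bar{\omega}^k_j)$ for the matrix of connection $1$-forms, this reads $df=f\,\omega$, i.e.\ $\omega=f^{-1}\,df$. Likewise, solving \eqref{Equ:edual} for the dual forms shows $\bar{\theta}^i=\sum_k(f^{-1})^i_k\,dx^k$, i.e.\ $\theta=f^{-1}\,dx$, where $\theta=(\bar{\theta}^1,\bar{\theta}^2,\bar{\theta}^3)^{\top}$ and $dx=(dx^1,dx^2,dx^3)^{\top}$. (Only invertibility of $f$ is used here; orthogonality is not needed for the two structure equations, although it yields the companion antisymmetry $\bar{\omega}^i_j=-\bar{\omega}^j_i$.)

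Both equations then drop out of $d^2=0$ and the graded Leibniz rule for matrix/vector-valued forms. Differentiating $f f^{-1}=I$ (entries are functions, so there are no extra signs) gives $d(f^{-1})=-f^{-1}(df)f^{-1}$. Since $d(df)=0$ and $d(dx^k)=0$ componentwise,
\[
d\omega=d(f^{-1})\wedge df=-\big(f^{-1}df\big)\wedge\big(f^{-1}df\big)=-\,\omega\wedge\omega,
\]
\[
d\theta=d(f^{-1})\wedge dx=-\big(f^{-1}df\big)\wedge\big(f^{-1}dx\big)=-\,\omega\wedge\theta,
\]
where I have used that the entries of $f^{-1}$ are $0$-forms and so commute past the wedge. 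Reading off the $(i,j)$- and $i$-entries, with $(\omega\wedge\omega)^i_j=\sum_k\bar{\omega}^i_k\wedge\bar{\omega}^k_j$ and $(\omega\wedge\theta)^i=\sum_k\bar{\omega}^i_k\wedge\bar{\theta}^k$, yields exactly \eqref{Equ:2str} and \eqref{Equ:1str}.

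A coordinate-free variant avoids introducing $f$: apply $d$ to the defining relation $d\bar{e}_j=\sum_k\bar{\omega}^k_j\bar{e}_k$ and use $d^2\bar{e}_j=0$ together with back-substitution of $d\bar{e}_k$ to obtain \eqref{Equ:2str}; and apply $d$ to the identity $dx=\sum_i\bar{\theta}^i\bar{e}_i$ (the differential of the position map, rewritten in the moving frame) to obtain \eqref{Equ:1str}, again using $d^2=0$ and linear independence of $\{\bar{e}_i\}$ to conclude coefficientwise. Either way, the only real obstacle is bookkeeping: the matrices of $1$-forms do not commute under wedge (so $\omega\wedge\omega\neq0$ in general), the graded sign in $d(\alpha\wedge\beta)=d\alpha\wedge\beta+(-1)^{\deg\alpha}\alpha\wedge d\beta$ must be carried through the vector-valued products, and the row/column index conventions must be fixed at the outset so that the signs and index placements agree with the stated form of \eqref{Equ:1str} and \eqref{Equ:2str}.
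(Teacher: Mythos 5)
Your proof is correct, and its main line is genuinely different from the paper's. The paper (following Spivak) stays inside the moving frame itself: it writes $dI = \sum_{i=1}^3 \bar{\theta}^i\, \bar{e}_i$ for the identity map $I$ of $\mathbb{R}^3$, extracts \eqref{Equ:1str} from $0 = d^2 I$ and \eqref{Equ:2str} from $0 = d^2\bar{e}_j$, expanding each resulting $\mathbb{R}^3$-valued $2$-form in the basis $\{\bar{e}_k\}$ and matching coefficients --- this is exactly the ``coordinate-free variant'' you sketch in your closing paragraph. Your primary argument instead trivializes the frame against the constant frame $\{a_l\}$: from $df = f\,\omega$ you obtain $\omega = f^{-1}\,df$ and $\theta = f^{-1}\,dx$, after which both structure equations collapse to $d^2 = 0$ together with $d(f^{-1}) = -f^{-1}(df)f^{-1}$; your index bookkeeping checks out (with the convention $df^l_j = \sum_k f^l_k\,\bar{\omega}^k_j$, the matrix wedge correctly unpacks as $(\omega\wedge\omega)^i_j = \sum_k \bar{\omega}^i_k \wedge \bar{\omega}^k_j$, and you rightly refrain from setting $\omega\wedge\omega = 0$). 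What your route buys: it makes visible that the structure equations express the flatness of $\mathbb{R}^3$ --- the connection matrix is pure gauge --- and it shows with no extra effort that only invertibility of $f$ is needed, which is precisely the observation the paper relegates to the remark following the proposition (orthonormality enters only for the antisymmetry $\bar{\omega}^j_i = -\bar{\omega}^i_j$). What the paper's route buys: it never introduces the transition matrix or its inverse, works directly with vector-valued forms, and its mechanism carries over verbatim to frames adapted to surfaces, where no global constant comparison frame is available.
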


\begin{proof}
See \cite[page 287]{SII1979} or \cite[page 78]{doC1994}.  The former obtains \eqref{Equ:1str} by expanding $0 = d^2 I$ as a sum of scalar multiples of the $\{ \bar{e}_i \}$, where $I: \mathbb{R}^3 \longrightarrow \mathbb{R}^3$ is the identity map (and $dI = \sum_{i=1}^3 \bar{\theta}^i  \wedge \bar{e}_i$, here an equation of $\mathbb{R}^3$-valued $1$-forms).  Similarly, \eqref{Equ:2str} comes from $0 = d^2 \bar{e}_j$.
% Spivak uses P for the identity map, but that conflicts with my use of P \in \mathbb{D}
\end{proof}
%
% insert option grid for overpic to show (relative) scale for \put commands
\begin{figure}
\begin{overpic}[height=1.75in]{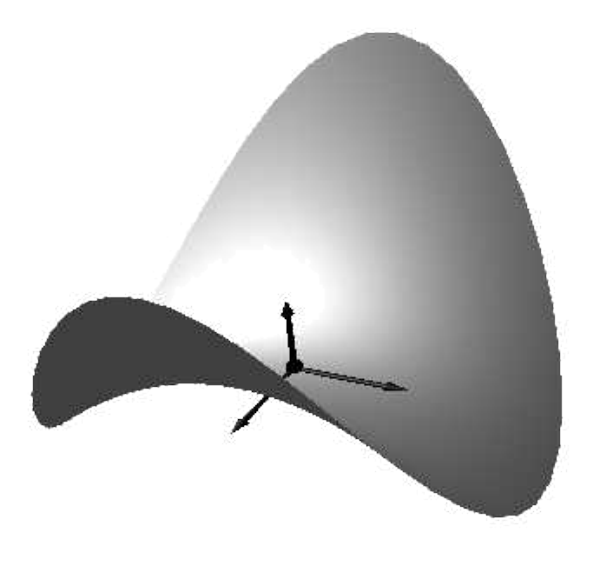}
\put(31,15){$\bar{e}_1$}
\put(68,31){$\bar{e}_2$}
\put(45,47){$\bar{e}_3$}
\put(53,34){$\bar{P}$}
\end{overpic}
\hfil
\includegraphics[height=1.75in]{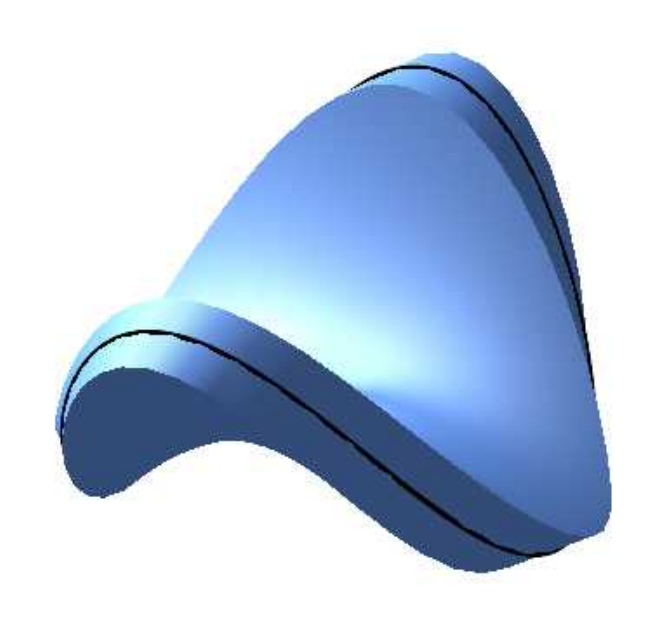}
\caption{At left, $\bar{U}_P = \phi(U_P)$ and frame at $\bar{P} = \phi(P)$; at right, $\bar{U}_P \subset W_P$.} \label{F:detail}
\end{figure}
Now, on each $U_P$, we can define 
the $1$-forms
$\theta^i := \phi^* \bar{\theta}^i$, $1 \le i \le 3$ (noting that $\bar{\theta}^3$ pulls back to the zero $1$-form)
and
$\omega^i_j := \phi^* \bar{\omega}^i_j$, $1 \le i,j \le 3$.  The reader can easily check that $\theta^1, \theta^2$ are dual to $e_1, e_2$.
% duality comes immediately from def of theta's, def of \bar{e}'s, and \eqref{Equ:edual}

\begin{rmk}
Orthonormality of the moving frame $\{ \bar{e}_1, \bar{e}_2, \bar{e}_3 \}$ is, in fact, not required for equations \eqref{Equ:1str} and \eqref{Equ:2str}; linear independence of the frame field suffices.  However, orthonormality implies the relations $\bar{\omega}^j_i = -\bar{\omega}^i_j$, hence $\omega^j_i = -\omega^i_j$, 
and in particular, $\omega^i_i = 0$
\cite[page 292]{SII1979} \cite[page 78]{doC1994}.
\end{rmk}

Since $\{ e_1, e_2 \}$ are globally defined on $\mathbb{D}$, the moving frames $\{ \bar{e}_1, \bar{e}_2, \bar{e}_3 \}$ at $\bar{R} \in \bar{U}_P \cap \bar{U}_Q$, arising respectively from $\phi |_{U_P}$ and $\phi |_{U_Q}$, will agree.  Hence the $1$-forms
$\theta^i$ and $\omega^i_j$, defined above, will agree on $U_P \cap U_Q$.  Consequently, as we did above with the line fields, we merge the ``local'' $\theta$'s and $\omega$'s into ``global'' smooth $1$-forms that are defined on $\mathbb{D}$.

Furthermore, since the ``unbarred'' versions of equations \eqref{Equ:1str} and \eqref{Equ:2str} can be justified by basic facts about pullbacks and wedge products in each $U_P$, Proposition \ref{T:cartanstr} implies the following equations on $\mathbb{D}$ (or more precisely, in the exterior algebra of $C^\infty$ differential forms on $\mathbb{D}$):

\begin{equation}\label{Equ:ddual}
\begin{cases}
d\theta^1 &= -(\omega^1_1 \wedge \theta^1 + \omega^1_2 \wedge \theta^2 + \omega^1_3 \wedge \theta^3)
= -0 \wedge \theta^1 - \omega^1_2 \wedge \theta^2 - \omega^1_3 \wedge 0 = \omega_1^2 \wedge \theta^2 \\
d\theta^2 &=  -(\omega^2_1 \wedge \theta^1 + \omega^2_2 \wedge \theta^2 + \omega^2_3 \wedge \theta^3)
= \dots = -\omega_1^2 \wedge \theta^1
\end{cases}
\end{equation}

\begin{equation}\label{Equ:dconn}
\begin{cases}
d\omega_1^2 &= -(\omega_1^2 \wedge \omega_1^1 + \omega_2^2 \wedge \omega_1^2 + \omega_3^2 \wedge \omega_1^3)
= \omega_2^3 \wedge \omega_1^3 = -\omega_1^3 \wedge \omega_2^3 \\
d\omega_1^3 &= -(\omega_1^3 \wedge \omega_1^1 + \omega_2^3 \wedge \omega_1^2 + \omega_3^3 \wedge \omega_1^3)
= -\omega_2^3 \wedge \omega_1^2 = \omega_1^2 \wedge \omega _2^3 \\
d\omega_2^3 &= -(\omega_1^3 \wedge \omega_2^1 + \omega_2^3 \wedge \omega_2^2 + \omega_3^3 \wedge \omega_2^3)
= \omega_1^3 \wedge \omega_1^2 = -\omega_1^2 \wedge \omega_1^3
\end{cases}
\end{equation}

Also, by definition, in each $\bar{U}_P$,

\begin{equation} \label{Equ:eigen}
d\bar{e}_3(\bar{e}_1) = -\bar{\kappa}_1 \bar{e}_1
\textrm{ and }
d\bar{e}_3(\bar{e}_2) = -\bar{\kappa}_2 \bar{e}_2
\end{equation}
\noindent
\cite[page 179]{BL2010},
\cite[pages 125--129]{MP1977},
\cite[pages 103--106]{SII1979}.
From the first equation
we have, using \eqref{Equ:conndef},  
$\bar{\omega}^1_3(\bar{e}_1) = -\bar{\kappa}_1$ and $\bar{\omega}^2_3(\bar{e}_1) = 0$;
similarly, the second equation implies
$\bar{\omega}^1_3(\bar{e}_2) = 0$ and $\bar{\omega}^2_3(\bar{e}_2) = -\bar{\kappa}_2$. 
We conclude, after pullback to $\mathbb{D}$, with $\kappa_i := \bar{\kappa}_i \circ \phi\ (i=1,2)$, that 

\begin{equation} \label{Equ:princurv}
\omega_3^1 = -\kappa_1 \theta^1  \textrm{  and  }  \omega_3^2 = -\kappa_2 \theta^2,
\textrm{ so }
\omega_1^3 = \kappa_1 \theta^1  \textrm{  and  }  \omega_2^3 = \kappa_2 \theta^2.
\end{equation}

\begin{rmk}
To keep the signs straight, it is useful to keep in mind the example of the unit sphere (with one point removed, so that there exists a moving frame), oriented with outward normal.  Then $\bar{e}_3$ equals $p$ at $p \in S^2$, so $d\bar{e}_3$ is the identity map.  The normal curvatures at each point all equal $-1$ (negative since curves on the sphere bend \emph{away} from the normal vector).  So for this surface the principal curvatures both equal $-1$, which is consistent with equation \eqref{Equ:eigen}.
\end{rmk}

\subsection{Expressions for $\omega_1^2$ in terms of  $\{ \theta^1, \theta^2 \}$} \label{SS:omegatheta}

We now turn to the proofs of two key lemmas, which relate $\omega_1^2$ to the principal curvatures and to the angle between the asymptotic directions and $e_1$.

% when comparing to Le\~{a}o, note that his $k_1,k_2$ are eigenvalues of the Gauss map, so MINUS the principal curvatures
\begin{lem} \label{L:connprinequ}
With $\{ e_1, e_2 \}$, $\{ \theta^1, \theta^2 \}$, and $\omega_1^2, \kappa_1, \kappa_2$ defined as above,
\begin{equation} \label{Equ:connprin}
(\kappa_1 - \kappa_2) \omega_1^2 = d\kappa_1(e_2) \theta^1 + d\kappa_2(e_1) \theta^2.
\end{equation}
\end{lem}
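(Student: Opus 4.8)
The plan is to obtain the identity by computing the exterior derivatives $d\omega_1^3$ and $d\omega_2^3$ in two different ways and equating the results. The inputs I would use are all already in hand: the principal-curvature expressions $\omega_1^3 = \kappa_1\,\theta^1$ and $\omega_2^3 = \kappa_2\,\theta^2$ from \eqref{Equ:princurv}, the first structure equations $d\theta^1 = \omega_1^2 \wedge \theta^2$ and $d\theta^2 = -\omega_1^2 \wedge \theta^1$ from \eqref{Equ:ddual}, and the second structure equations for $d\omega_1^3$ and $d\omega_2^3$ from \eqref{Equ:dconn}. Notably, the hypothesis that the Gaussian curvature is constant is not needed here; only $\kappa_1 \neq \kappa_2$ (in fact $\kappa_1 > 0 > \kappa_2$) will matter, consistent with the remark that constancy is deferred to Lemma \ref{L:connalpha}.

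First I would differentiate $\omega_1^3 = \kappa_1\,\theta^1$ using the Leibniz rule for forms, giving $d\omega_1^3 = d\kappa_1 \wedge \theta^1 + \kappa_1\, d\theta^1 = d\kappa_1 \wedge \theta^1 + \kappa_1\, \omega_1^2 \wedge \theta^2$ after substituting from \eqref{Equ:ddual}. Comparing this with the independent evaluation $d\omega_1^3 = \omega_1^2 \wedge \omega_2^3 = \kappa_2\, \omega_1^2 \wedge \theta^2$ (from \eqref{Equ:dconn} and \eqref{Equ:princurv}) produces a single equation of $2$-forms, which rearranges to $d\kappa_1 \wedge \theta^1 = -(\kappa_1 - \kappa_2)\, \omega_1^2 \wedge \theta^2$. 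The same computation applied to $\omega_2^3 = \kappa_2\,\theta^2$, now using $d\theta^2 = -\omega_1^2 \wedge \theta^1$ and $d\omega_2^3 = -\omega_1^2 \wedge \omega_1^3$, yields the companion equation $d\kappa_2 \wedge \theta^2 = -(\kappa_1 - \kappa_2)\, \omega_1^2 \wedge \theta^1$.

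To finish, I would use that $\{\theta^1, \theta^2\}$ is a basis for $1$-forms on $\mathbb{D}$ (being dual to the frame $\{e_1, e_2\}$), writing $\omega_1^2 = a\,\theta^1 + b\,\theta^2$ and expanding each differential as $d\kappa_i = d\kappa_i(e_1)\,\theta^1 + d\kappa_i(e_2)\,\theta^2$. Each side of the two boxed equations then collapses to a scalar multiple of $\theta^1 \wedge \theta^2$; matching coefficients gives $(\kappa_1 - \kappa_2)\,a = d\kappa_1(e_2)$ from the first and $(\kappa_1 - \kappa_2)\,b = d\kappa_2(e_1)$ from the second. Substituting these back into $(\kappa_1 - \kappa_2)\,\omega_1^2 = (\kappa_1 - \kappa_2)(a\,\theta^1 + b\,\theta^2)$ immediately produces the claimed identity \eqref{Equ:connprin}.

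The argument is almost entirely computational, so the only real obstacle is sign discipline: correctly handling the antisymmetry of the wedge product together with the minus signs in the structure equations \eqref{Equ:ddual} and \eqref{Equ:dconn}. I would guard against errors by reducing every $2$-form to a coefficient times the single basis element $\theta^1 \wedge \theta^2$ under a fixed orientation, so that the coefficient comparisons become unambiguous. It is also worth remarking that the coefficients $a$ and $b$ are well defined precisely because $\kappa_1 - \kappa_2 \neq 0$, which is exactly the negativity of the Gaussian curvature already invoked earlier.
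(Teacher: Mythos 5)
Your proposal is correct and follows essentially the same route as the paper: both compute $d\omega_1^3$ and $d\omega_2^3$ in two ways (Leibniz rule plus the first structure equations versus the second structure equations with \eqref{Equ:princurv}) and arrive at the same pair of $2$-form identities. The only cosmetic difference is how the coefficients are extracted --- you expand $\omega_1^2$ and $d\kappa_i$ in the basis $\{\theta^1,\theta^2\}$ and match coefficients of $\theta^1 \wedge \theta^2$, while the paper evaluates each $2$-form on the pair $(e_1,e_2)$; these are the same computation in different notation.
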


\begin{proof} %WDD proof
To verify this equation of $1$-forms, it suffices to show the following two equations:
\begin{equation} \label{Equ:ate1ate2}
(\kappa_1 - \kappa_2) \omega_1^2 (e_1) \overset{?}{=} d\kappa_1(e_2) \textrm{  and  }
(\kappa_1 - \kappa_2) \omega_1^2 (e_2) \overset{?}{=} d\kappa_2(e_1)
\end{equation}
We rewrite $d\omega_1^3$ in two ways, using equations \eqref{Equ:ddual}, \eqref{Equ:dconn}, and \eqref{Equ:princurv}, obtaining
\[
\kappa_1 \omega_1^2 \wedge \theta^2 + d\kappa_1 \wedge \theta^1 
\overset{\eqref{Equ:ddual}}{=}
\kappa_1 d\theta^1 + d\kappa_1 \wedge \theta^1 
\overset{\eqref{Equ:princurv}}{=} 
d\omega_1^3 
\overset{\eqref{Equ:dconn}}{=} 
\omega_1^2 \wedge \omega_2^3 
\overset{\eqref{Equ:princurv}}{=} 
\kappa_2 \omega_1^2 \wedge \theta^2.
\]
\par\noindent
In other words, 
$(\kappa_1 - \kappa_2) \omega_1^2 \wedge \theta^2 = -d\kappa_1 \wedge \theta^1$.
Evaluating both sides of this equation on the pair ($e_1$,$e_2$), we obtain
\begin{align*}
(\kappa_1 - \kappa_2) \omega_1^2 \wedge \theta^2(e_1,e_2) &= 
(\kappa_1 - \kappa_2) (\omega_1^2(e_1) \ \theta^2(e_2) - \omega_1^2(e_2) \ \theta^2(e_1))
= (\kappa_1 - \kappa_2) \omega_1^2(e_1) \\
\textrm{  and  } -d\kappa_1 \wedge \theta^1(e_1,e_2) &=
-d\kappa_1(e_1) \ \theta^1(e_2) + d\kappa_1(e_2) \ \theta^1(e_1)
= d\kappa_1(e_2),
\end{align*}
\par\noindent
which justifies the first half of \eqref{Equ:ate1ate2}.  
Similarly, rewriting $d\omega_2^3$,
\begin{equation*}
-\kappa_2 \omega_1^2 \wedge \theta^1 + d\kappa_2 \wedge \theta^2 
\overset{\eqref{Equ:ddual}}{=}
\kappa_2 d\theta^2 + d\kappa_2 \wedge \theta^2 
\overset{\eqref{Equ:princurv}}{=} 
d\omega_2^3 
\overset{\eqref{Equ:dconn}}{=} 
-\omega_1^2 \wedge \omega_1^3 
\overset{\eqref{Equ:princurv}}{=} 
-\kappa_1 \omega_1^2 \wedge \theta^1.
\end{equation*}
\par\noindent
In other words, $-(\kappa_1 - \kappa_2) \omega_1^2 \wedge \theta^1 = d\kappa_2 \wedge \theta^2$.  Again evaluating the left-hand and right-hand sides at ($e_1$,$e_2$),
\begin{align*}
-(\kappa_1 - \kappa_2) \omega_1^2 \wedge \theta^1(e_1,e_2) &= 
\dots
= (\kappa_1 - \kappa_2) \omega_1^2(e_2) \\
\textrm{  and  } d\kappa_2 \wedge \theta^2(e_1,e_2) &=
\dots
= d\kappa_2(e_1),
\end{align*}
\par\noindent
which justifies the second half of \eqref{Equ:ate1ate2}.
\end{proof}

For all $P \in \mathbb{D}$, the normal curvatures at $\phi(P)$ can be described by 
$\sff(v) := \langle -d\bar{e}_3(\phi_*(v)), \phi_*(v) \rangle$,
defined in terms of the Euclidean inner product for vectors in $\mathbb{R}^3$
\cite[page 87]{doC1994}.
In essence, this is the second fundamental form
for the immersed surface near $\phi(P)$, pulled back to a quadratic form on $T_P(\mathbb{D})$.
Using Euler's theorem
\cite[page 147]{doC2016}
\cite[page 129]{MP1977},
and (for the first time) making use of the fact that Gaussian curvature $K := \kappa_1 \kappa_2$ equals $-1$ everywhere on $\mathbb{D}$, we have normal curvature zero when
\[
0 = \sff(v) = \kappa_1 \cos^2\alpha + \kappa_2 \sin^2\alpha = \kappa_1 \cos^2\alpha - (1/\kappa_1) \sin^2\alpha
\]
\par\noindent
for a unit vector $v$ that makes an angle $\pm\alpha$ with $e_1$.
We conclude that $\tan^2 \alpha = \kappa_1^2$, so set $\alpha := \arctan(\kappa_1) \in (0,\pi/2)$, and define the asymptotic directions
\begin{equation} \label{Equ:defasympt}
E_1 := \cos \alpha \cdot e_1 - \sin \alpha \cdot e_2 \textrm{  and  } E_2 := \cos \alpha \cdot e_1 + \sin \alpha \cdot e_2.
\end{equation}
\par\noindent
It follows that
\begin{equation} \label{Equ:esfromEs}
e_1 = \frac{1}{2} \sec\alpha (E_1+ E_2) 
%= \frac{1}{2} \sec\alpha \left( \frac{\partial}{\partial u^1} + \frac{\partial}{\partial u^2} \right)
\textrm{  and  }
e_2 = \frac{1}{2} \csc\alpha (-E_1+E_2).
%= \frac{1}{2} \csc\alpha \left( -\frac{\partial}{\partial u^1} + \frac{\partial}{\partial u^2} \right) \label{Equ:Efrome}
\end{equation}

\begin{lem} \label{L:connalpha}
With $\{ e_1, e_2 \}$, $\{ \theta^1, \theta^2 \}$, and $\omega_1^2, \alpha$ defined as above,
\begin{equation}
\omega_1^2 = \tan\alpha\, d\alpha(e_2)\cdot \theta^1 + \cot\alpha\, d\alpha(e_1) \cdot \theta^2 \label{Equ:conntancot}
% \frac{d[\cos \alpha](e_2)}{\cos\alpha} \cdot \theta^1 - \frac{d[\sin\alpha](e_1)}{\sin\alpha} \cdot \theta^2 \notag\\  (compare to Le\~{a}o Lema 2)
\end{equation}
\end{lem}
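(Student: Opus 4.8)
The plan is to specialize Lemma \ref{L:connprinequ} by re-expressing both principal curvatures entirely in terms of $\alpha$, which is exactly where the constant-curvature hypothesis $K=-1$ becomes essential. Since $\alpha := \arctan(\kappa_1)$ by construction, I immediately have $\kappa_1 = \tan\alpha$; and since $K = \kappa_1\kappa_2 = -1$ everywhere on $\mathbb{D}$, it follows that $\kappa_2 = -1/\kappa_1 = -\cot\alpha$. This single substitution converts the curvature-theoretic identity of Lemma \ref{L:connprinequ} into the trigonometric identity claimed here, so no new geometric input is required beyond that lemma and the value of $K$.

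Concretely, first I would compute the scalar factor $\kappa_1 - \kappa_2 = \tan\alpha + \cot\alpha = 1/(\sin\alpha\cos\alpha)$. Next I would differentiate the two relations: from $\kappa_1 = \tan\alpha$ I get $d\kappa_1 = \sec^2\alpha\, d\alpha$, hence $d\kappa_1(e_2) = \sec^2\alpha\, d\alpha(e_2)$; and from $\kappa_2 = -\cot\alpha$ I get $d\kappa_2 = \csc^2\alpha\, d\alpha$, hence $d\kappa_2(e_1) = \csc^2\alpha\, d\alpha(e_1)$. Substituting these three quantities into equation \eqref{Equ:connprin} yields
\[
\frac{1}{\sin\alpha\cos\alpha}\,\omega_1^2 = \sec^2\alpha\, d\alpha(e_2)\,\theta^1 + \csc^2\alpha\, d\alpha(e_1)\,\theta^2.
\]

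Finally, I would multiply through by $\sin\alpha\cos\alpha$ and simplify the two coefficients: $\sin\alpha\cos\alpha\cdot\sec^2\alpha = \tan\alpha$ and $\sin\alpha\cos\alpha\cdot\csc^2\alpha = \cot\alpha$, which gives \eqref{Equ:conntancot} exactly. I do not anticipate a genuine obstacle here: the argument is a direct algebraic reduction, and the only point requiring a moment's care is verifying that the chain-rule derivatives of $\tan\alpha$ and $-\cot\alpha$ are bookkept with the correct signs (so that both contributions to $\omega_1^2$ come out positive). It is worth noting explicitly that $\alpha\in(0,\pi/2)$ guarantees $\sin\alpha,\cos\alpha\neq 0$, so that $\tan\alpha$, $\cot\alpha$, and the factor $1/(\sin\alpha\cos\alpha)$ are all well defined throughout the computation.
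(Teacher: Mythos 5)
Your proposal is correct and follows essentially the same route as the paper's own proof: substitute $\kappa_1 = \tan\alpha$ and $\kappa_2 = -\cot\alpha$ (the latter using $K=-1$) into Lemma \ref{L:connprinequ}, compute $\kappa_1 - \kappa_2 = 1/(\sin\alpha\cos\alpha)$, differentiate via the chain rule, and simplify the coefficients to $\tan\alpha$ and $\cot\alpha$. Your sign bookkeeping for $d(-\cot\alpha) = \csc^2\alpha\,d\alpha$ matches the paper exactly, as does your observation that $\alpha \in (0,\pi/2)$ keeps everything well defined.
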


\begin{proof}
Since $\kappa_2 = -1/\kappa_1 = -1/\tan \alpha = -\cot \alpha$,
\[
\kappa_1 - \kappa_2 = \frac{\sin \alpha}{\cos \alpha} + \frac{\cos \alpha}{\sin \alpha} = \frac{1}{\cos \alpha\, \sin \alpha}.
\]
\par\noindent
Therefore, using $d\kappa_1 = d(\tan \alpha) = \sec^2 \alpha\, d\alpha$ and 
$d\kappa_2 = d(-\cot \alpha) = \csc^2 \alpha\, d\alpha$,
\begin{align}
\frac{d\kappa_1}{\kappa_1 - \kappa_2} &=  (\sec^2 \alpha\, d\alpha)(\cos \alpha\, \sin \alpha) = \tan \alpha \, d\alpha  \label{Equ:dk1}\\
\frac{d\kappa_2}{\kappa_1 - \kappa_2} &=  (\csc^2 \alpha\, d\alpha)(\cos \alpha\, \sin \alpha) = \cot \alpha\, d\alpha \label{Equ:dk2}
\end{align}
\par\noindent
After evaluating \eqref{Equ:dk1} at $e_2$ and \eqref{Equ:dk2} at $e_1$, the desired conclusion follows from Lemma \ref{L:connprinequ}.
\end{proof}

\subsection{Use $\{ E_1, E_2 \}$ to construct a global coordinate map} \label{SS:F}

We now turn our attention to the properties of the asymptotic vector fields $E_1,E_2$.

\begin{lem} \label{L:closed}
Let $\{ \eta^1, \eta^2 \}$ be the $1$-forms which are dual to $\{ E_1, E_2 \}$.  Then $d\eta^1 = 0$ and $d\eta^2 = 0$.
\end{lem}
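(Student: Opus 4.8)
The plan is to write $\eta^1$ and $\eta^2$ explicitly in the dual basis $\{\theta^1,\theta^2\}$, apply the exterior derivative $d$, and watch the terms cancel. Since $\{\eta^i\}$ is dual to $\{E_j\}$ while $\{\theta^i\}$ is dual to $\{e_j\}$, inverting the linear relation \eqref{Equ:defasympt} (equivalently, reading the coefficients off \eqref{Equ:esfromEs}, since the covector matrix is the inverse transpose of the vector matrix) yields
\[
\eta^1 = \tfrac{1}{2}\sec\alpha\,\theta^1 - \tfrac{1}{2}\csc\alpha\,\theta^2, \qquad \eta^2 = \tfrac{1}{2}\sec\alpha\,\theta^1 + \tfrac{1}{2}\csc\alpha\,\theta^2.
\]
I would confirm these by checking $\eta^i(E_j)=\delta^i_j$ directly as a sanity check.

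Next I would differentiate via the Leibniz rule, using $d(\sec\alpha) = \sec\alpha\tan\alpha\,d\alpha$ and $d(\csc\alpha)=-\csc\alpha\cot\alpha\,d\alpha$ for the coefficient functions, and substituting $d\theta^1 = \omega_1^2\wedge\theta^2$ together with $d\theta^2 = -\omega_1^2\wedge\theta^1$ from \eqref{Equ:ddual} for the basis forms. This expresses both $d\eta^1$ and $d\eta^2$ as combinations of terms of the shape $d\alpha\wedge\theta^i$ and $\omega_1^2\wedge\theta^i$.

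The crux is then to feed in the precise expression for $\omega_1^2$ from Lemma \ref{L:connalpha}, along with the expansion $d\alpha = d\alpha(e_1)\,\theta^1 + d\alpha(e_2)\,\theta^2$. Rewriting every wedge in terms of $\theta^1\wedge\theta^2$ (using $\theta^i\wedge\theta^i = 0$), I expect the $d\alpha(e_2)$ contributions from differentiating $\sec\alpha$ and $\csc\alpha$ to cancel against the $\omega_1^2\wedge\theta^2$ terms, and the $d\alpha(e_1)$ contributions to cancel against the $\omega_1^2\wedge\theta^1$ terms, leaving $d\eta^1 = d\eta^2 = 0$.

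I do not expect a genuine obstacle: this is routine bookkeeping once the coefficients are in hand. The only real content is that Lemma \ref{L:connalpha} delivers exactly the coefficients $\tan\alpha\,d\alpha(e_2)$ and $\cot\alpha\,d\alpha(e_1)$ in $\omega_1^2$ that match the $d\alpha$ contributions arising from $d(\sec\alpha)$ and $d(\csc\alpha)$ — which is precisely why both asymptotic dual forms turn out to be closed. The one thing to watch carefully is the sign bookkeeping in the wedge products (and not confusing the $\sec$ and $\csc$ derivatives).
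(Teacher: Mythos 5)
Your proposal is correct and is essentially identical to the paper's own proof: the paper likewise derives $\eta^1 = \tfrac{1}{2}[\sec\alpha\,\theta^1 - \csc\alpha\,\theta^2]$ and $\eta^2 = \tfrac{1}{2}[\sec\alpha\,\theta^1 + \csc\alpha\,\theta^2]$ (its equation \eqref{Equ:etafromtheta}), applies the Leibniz rule with the structure equations \eqref{Equ:ddual}, expands $d\alpha$ in the $\theta^i$ basis, and substitutes $\omega_1^2$ from Lemma \ref{L:connalpha} to obtain exactly the cancellations you predict (in fact showing $d(\sec\alpha\cdot\theta^1) = 0$ and $d(\csc\alpha\cdot\theta^2) = 0$ separately, then combining). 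Your identification of Lemma \ref{L:connalpha} as the source of the precisely matching coefficients $\tan\alpha\,d\alpha(e_2)$ and $\cot\alpha\,d\alpha(e_1)$ is exactly the content of the paper's computation.
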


\begin{proof}
It is not hard to use the definitions of $E_1$ and $E_2$ in \eqref{Equ:defasympt} to verify that
\begin{equation}  \label{Equ:etafromtheta}
\eta^1 = \frac{1}{2} [\sec\alpha \cdot \theta^1 - \csc\alpha \cdot \theta^2]
\textrm{  and  }
\eta^2 = \frac{1}{2} [\sec\alpha \cdot \theta^1 + \csc\alpha \cdot \theta^2]
\end{equation}

Next, we see that
\begin{align} \label{Equ:etasec}
d(\sec\alpha \cdot \theta^1) &\overset{\eqref{Equ:ddual}}{=} \sec\alpha\, \tan\alpha\, d\alpha \wedge \theta^1 + \sec\alpha (\omega_1^2 \wedge \theta^2) \notag\\
% Details d\alpha(v) = d\alpha(v^1e_1+v^2e_2) = v^1*d\alpha(e_1) + v^2*d\alpha(e_2)
% = \theta^1(v)*d\alpha(e_1) + \theta^2(v)*d\alpha(e_2) = [d\alpha(e_1)*\theta^1 + d\alpha(e_2)*\theta^2](v)
&=  \sec\alpha\, \tan\alpha\, (d\alpha(e_1)\cdot \theta^1 + d\alpha(e_2)\cdot \theta^2) \wedge \theta^1 + \sec\alpha (\omega_1^2 \wedge \theta^2) \notag\\
&= -\sec\alpha\, \tan\alpha\, d\alpha(e_2)\cdot \theta^1 \wedge \theta^2 + \sec\alpha (\omega_1^2 \wedge \theta^2) \notag\\
&\overset{\eqref{Equ:conntancot}}{=}  -\sec\alpha\, \tan\alpha\, d\alpha(e_2)\cdot \theta^1 \wedge \theta^2 \notag\\ 
&\qquad +\sec\alpha ((\tan\alpha\, d\alpha(e_2)\cdot \theta^1 + \cot\alpha\, d\alpha(e_1) \cdot \theta^2) \wedge \theta^2) \notag\\
&=  -\sec\alpha\, \tan\alpha\, d\alpha(e_2)\cdot \theta^1 \wedge \theta^2 + \sec\alpha (\tan\alpha\, d\alpha(e_2)\cdot \theta^1 \wedge \theta^2) = 0
\end{align}
\par\noindent
and similarly
\begin{align} \label{Equ:etacosec}
d(\csc\alpha \cdot \theta^2) &\overset{\eqref{Equ:ddual}}{=} -\csc\alpha\, \cot\alpha\, d\alpha \wedge \theta^2 + \csc\alpha (-\omega_1^2 \wedge \theta^1) \notag\\
&=  -\csc\alpha\, \cot\alpha\, (d\alpha(e_1)\cdot \theta^1 + d\alpha(e_2)\cdot \theta^2) \wedge \theta^2 + \csc\alpha (-\omega_1^2 \wedge \theta^1) \notag\\
&= -\csc\alpha\, \cot\alpha\, d\alpha(e_1)\cdot \theta^1 \wedge \theta^2 - \csc\alpha (\omega_1^2 \wedge \theta^1) \notag\\
&\overset{\eqref{Equ:conntancot}}{=}  -\csc\alpha\, \cot\alpha\, d\alpha(e_1)\cdot \theta^1 \wedge \theta^2 \notag\\
&\qquad - \csc\alpha ((\tan\alpha\, d\alpha(e_2)\cdot \theta^1 + \cot\alpha\, d\alpha(e_1) \cdot \theta^2) \wedge \theta^1) \notag\\
&=  -\csc\alpha\, \cot\alpha\, d\alpha(e_1)\cdot \theta^1 \wedge \theta^2 - \csc\alpha (\cot\alpha\, d\alpha(e_1)\cdot (-\theta^1 \wedge \theta^2)) = 0 
\end{align}

By subtracting and adding equations \eqref{Equ:etasec} and \eqref{Equ:etacosec}, and then comparing to equation \eqref{Equ:etafromtheta}, we conclude that $d\eta^1 = 0$ and $d\eta^2 = 0$.
\end{proof}

We next show that the vector fields $E_1$ and $E_2$ on $\mathbb{D}$ each generate a global flow (i.e., each integral curve is defined for all $t \in \mathbb{R}$).  
%First, consider an integral curve containing $O \in \mathbb{D}$.  
We will use the following special case of \cite[Chapter 6, \S 6, Theorem 2]{C1961}.
% \alpha in Coddington conflicts with our angle, changed to \epsilon

\begin{thm} \label{T:existlocalsoln}
Let $a,b$ be positive numbers, let $t_0 \in \mathbb{R}$, let $\mathbf{y_0} \in \mathbb{R}^2$, and let 
$R := \{ (t,\mathbf{y}) : |t - t_0| \le a, |\mathbf{y}-\mathbf{y_0}| \le b \}$.  
If
\begin{enumerate}
\item $f: R \rightarrow \mathbb{R}^2$ is a continuous function, and 
\item there exists $L > 0$ such that
for all $(t,\mathbf{y_1}), (t,\mathbf{y_2}) \in R$, $| f(t,\mathbf{y_1}) - f(t,\mathbf{y_2}) | \le L |\mathbf{y_1} - \mathbf{y_2}|$, and
\item there exists $M > 0$ such that for all $(t,\mathbf{y}) \in R$, $|f(t,\mathbf{y})| \le M$,
\end{enumerate}
then
the initial-value problem $d\mathbf{y}/dt = f(t,\mathbf{y}),\ \mathbf{y}(t_0)=\mathbf{y_0}$ has a solution 
\[
\mathbf{y} = \gamma(t), \ 
\gamma: (t_0-\epsilon, t_0+\epsilon) \rightarrow  \{ \mathbf{y} \in \mathbb{R}^2 : |\mathbf{y}-\mathbf{y_0}| \le b \},
\]
where $\epsilon := \min ( a, b/M )$. \qed
\end{thm}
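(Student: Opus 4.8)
The plan is to recast the initial-value problem as an equivalent integral equation and solve it by Picard's method of successive approximations. Because $f$ is continuous, a function $\mathbf{y}$ is a $C^1$ solution of $d\mathbf{y}/dt = f(t,\mathbf{y})$ with $\mathbf{y}(t_0) = \mathbf{y_0}$ if and only if it is a continuous solution of
\[
\mathbf{y}(t) = \mathbf{y_0} + \int_{t_0}^t f(s,\mathbf{y}(s))\,ds,
\]
the equivalence following from the fundamental theorem of calculus applied in each coordinate. I would therefore set $\gamma_0(t) := \mathbf{y_0}$ and $\gamma_{n+1}(t) := \mathbf{y_0} + \int_{t_0}^t f(s,\gamma_n(s))\,ds$ for $|t - t_0| \le \epsilon$, and show that these iterates converge uniformly to a fixed point of the integral operator.

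First I would verify, by induction on $n$, that each $\gamma_n$ is well-defined and continuous on $[t_0 - \epsilon, t_0 + \epsilon]$ and satisfies $|\gamma_n(t) - \mathbf{y_0}| \le b$. This is precisely where hypothesis (3) and the choice $\epsilon = \min(a, b/M)$ are used: the bound $|f| \le M$ gives $|\gamma_{n+1}(t) - \mathbf{y_0}| \le M|t - t_0| \le M\epsilon \le b$, so each point $(s,\gamma_n(s))$ remains inside $R$ and $f(s,\gamma_n(s))$ is defined at the next stage.

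Next, the Lipschitz hypothesis (2) drives the convergence. I would establish, again by induction, the estimate
\[
|\gamma_{n+1}(t) - \gamma_n(t)| \le \frac{M L^n |t - t_0|^{n+1}}{(n+1)!} \le \frac{M L^n \epsilon^{n+1}}{(n+1)!},
\]
the base case $n=0$ coming from the bound $M$ and the inductive step from $|\gamma_{n+1}(t) - \gamma_n(t)| \le L\left|\int_{t_0}^t |\gamma_n(s) - \gamma_{n-1}(s)|\,ds\right|$. Summing the telescoping series $\gamma_0 + \sum_{n\ge 0}(\gamma_{n+1} - \gamma_n)$ and comparing with the convergent series $\sum_{n\ge 0} M L^n \epsilon^{n+1}/(n+1)!$ (a Weierstrass $M$-test) shows that the iterates converge uniformly on $[t_0 - \epsilon, t_0 + \epsilon]$ to a continuous limit $\gamma$ still satisfying $|\gamma(t) - \mathbf{y_0}| \le b$.

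Finally I would pass to the limit in the integral equation: since $\gamma_n \to \gamma$ uniformly and $f$ is uniformly continuous on the compact set $R$, the integrands $f(s,\gamma_n(s))$ converge uniformly to $f(s,\gamma(s))$, so the limit may be moved inside the integral, giving $\gamma(t) = \mathbf{y_0} + \int_{t_0}^t f(s,\gamma(s))\,ds$; hence $\gamma$ is the desired solution. The step needing the most care is the uniform-convergence bookkeeping---verifying the factorial estimate and justifying the interchange of limit and integral. An alternative I would keep in mind is to apply the Banach fixed-point theorem directly to the integral operator on the complete space $C([t_0-\epsilon,t_0+\epsilon],\, \{\mathbf{y} : |\mathbf{y}-\mathbf{y_0}| \le b\})$; with a Bielecki-weighted sup norm the operator becomes a contraction on the whole interval, yielding existence (and, though not asserted here, uniqueness) at one stroke.
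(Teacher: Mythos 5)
Your proof is correct, and it is essentially the paper's own: the paper gives no argument here, quoting the theorem as a special case of the cited result in Coddington, whose proof is exactly the Picard successive-approximation scheme you carry out (integral-equation reformulation, invariance of the ball $|\mathbf{y}-\mathbf{y_0}|\le b$ via $|f|\le M$ and $\epsilon\le b/M$, factorial estimates from the Lipschitz bound, uniform convergence, and passage to the limit in the integral). Your argument also correctly accounts for the paper's remark immediately after the statement---that $\epsilon=\min(a,b/M)$ does not depend on $L$---since in your construction $L$ enters only the convergence estimates, not the domain on which the iterates are defined.
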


Note that $\epsilon$ does not depend on $L$, the Lipschitz constant.  

\begin{prop} \label{P:globalflow}
If $E$ is a $C^\infty$ vector field on $\mathbb{D}$ which everywhere has unit length with respect to the hyperbolic metric, then
every maximal integral curve of $E$ is defined for all $t \in \mathbb{R}$.
\end{prop}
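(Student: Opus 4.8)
The plan is to work in the Euclidean coordinate chart $\mathbb{D} \subset \mathbb{R}^2$ and invoke Theorem \ref{T:existlocalsoln}, while using the hyperbolic geometry to prevent integral curves from escaping to the boundary $\partial \mathbb{D}$ in finite time. I would write $E$ in Euclidean coordinates as a smooth map $f: \mathbb{D} \to \mathbb{R}^2$, so that an integral curve of $E$ is a solution of the autonomous system $d\mathbf{y}/dt = f(\mathbf{y})$. Since $E$ has unit length in the hyperbolic metric, and that metric is a positive conformal multiple of the Euclidean one, the Euclidean speed $|f(\mathbf{y})|$ is bounded above on all of $\mathbb{D}$ (by the reciprocal of the conformal factor), whereas the Lipschitz constant of $f$ may blow up as $\mathbf{y} \to \partial \mathbb{D}$. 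The point of the remark preceding the proposition is exactly that the existence time $\epsilon = \min(a, b/M)$ furnished by Theorem \ref{T:existlocalsoln} is controlled only by the speed bound $M$ and the box size $b$, and is insensitive to this Lipschitz blow-up.

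First I would record the geometric confinement. Because $|E| \equiv 1$ in the hyperbolic metric, each integral curve is parametrized by hyperbolic arc length, so if $\gamma$ is defined on an interval containing a fixed base time $t_0$, then $\rho(\gamma(t_0), \gamma(t)) \le |t - t_0|$, where $\rho$ denotes hyperbolic distance. Consequently, for any finite time horizon $T$ the curve stays inside the closed hyperbolic ball of radius $T$ about $\gamma(t_0)$; such a ball is a compact subset of the open disk $\mathbb{D}$, since closed hyperbolic balls are bounded away from $\partial \mathbb{D}$ in the Euclidean sense (see the Appendix).

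Next I would set up a uniform extension step. Suppose, for contradiction, that some maximal integral curve $\gamma$ is defined only on $(\alpha, \beta)$ with $\beta < \infty$ and $t_0 \in (\alpha,\beta)$ (the case $\alpha > -\infty$ is symmetric). Let $\bar{B}$ be the closed hyperbolic ball of radius $\beta - t_0$ about $\gamma(t_0)$; by the previous paragraph $\gamma(t) \in \bar{B}$ for all $t \in [t_0, \beta)$, and $\bar{B}$ is compact in $\mathbb{D}$. Let $b > 0$ be half the Euclidean distance from $\bar{B}$ to $\partial \mathbb{D}$, let $K$ be the (compact) set of points within Euclidean distance $b$ of $\bar{B}$, and let $M := \sup_K |f|$; smoothness also gives some Lipschitz bound $L$ on $K$, which we discard. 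For any base point $\mathbf{y_0} \in \bar{B}$ and any base time $s_0$, the box $R = \{(t, \mathbf{y}) : |t - s_0| \le a,\ |\mathbf{y} - \mathbf{y_0}| \le b\}$ lies in $\mathbb{R} \times K$ and satisfies the three hypotheses of Theorem \ref{T:existlocalsoln} (taking $a := 1$), so the initial-value problem through $(s_0, \mathbf{y_0})$ has a solution on an interval of fixed half-length $\epsilon_0 := \min(1,\, b/M) > 0$, independent of the choice of $\mathbf{y_0} \in \bar{B}$.

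Finally I would derive the contradiction. Choosing $s_0 \in (\beta - \epsilon_0, \beta)$, we have $\gamma(s_0) \in \bar{B}$, and the step above produces a solution through $(s_0, \gamma(s_0))$ defined on $(s_0 - \epsilon_0,\, s_0 + \epsilon_0)$, an interval containing $\beta$ since $s_0 + \epsilon_0 > \beta$. By the standard uniqueness theorem for ordinary differential equations with a locally Lipschitz (indeed smooth) right-hand side, this solution agrees with $\gamma$ wherever both are defined, so $\gamma$ extends to a strictly larger interval, contradicting maximality. Hence $\beta = +\infty$, and symmetrically $\alpha = -\infty$. The main obstacle is precisely the threat of finite-time escape to $\partial \mathbb{D}$; it is defeated by pairing the unit-hyperbolic-speed confinement (which pins $\gamma$ to the fixed compact set $\bar{B}$, keeping $b$, $M$, and hence $\epsilon_0$ bounded below) with the $L$-independence of the existence time highlighted in the remark.
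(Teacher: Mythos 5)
Your proof is correct, but it takes a genuinely different route from the paper's. The paper exploits the homogeneity of the hyperbolic plane: it solves the initial-value problem once, at the origin, with explicit constants ($a=1$, $b=1/2$, $M=1/2$, hence $\epsilon=1$), transports this to an arbitrary point $P$ by pushing $E$ forward under the M\"obius isometry \eqref{Equ:PtofromO} taking $P$ to $O$, and then delegates the extension bookkeeping to the Uniform Time Lemma \cite[Lemma 9.15, page 216]{Lee2013}, which upgrades ``every point flows for time $\ge 1$'' to completeness of the flow. Your argument instead rests on completeness/properness of the metric: unit hyperbolic speed confines any finite-time piece of an integral curve to a closed hyperbolic ball, which is compact in $\mathbb{D}$ (Appendix, item 3), and a uniform existence time over that compact set, combined with a uniqueness-and-gluing step at a hypothetical finite endpoint $\beta$, contradicts maximality---the classical ``no finite-time escape from a compact set'' lemma. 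One structural difference worth noting: in the paper, the remark that $\epsilon = \min(a, b/M)$ is independent of the Lipschitz constant $L$ is essential, because the recentered field $\psi_* E$ changes with $P$ and its Lipschitz constant on the fixed box need not be bounded as $P$ varies; in your version, $L$ is automatically uniformly bounded on the fixed compact set $K$, so the $L$-independence you emphasize is a convenience rather than a necessity, and any standard Picard--Lindel\"of statement would serve. What each approach buys: the paper's is shorter and yields the clean uniform time $\epsilon = 1$ at every point, but it uses the transitive isometry group of $\mathbb{H}^2$; yours never mentions isometries and generalizes verbatim to any smooth unit-length vector field on any complete Riemannian surface, at the cost of carrying out explicitly the uniqueness and extension argument (which you do correctly---the overlap of the two solutions through $(s_0,\gamma(s_0))$ is handled by local uniqueness for the smooth right-hand side) that the paper outsources to Lee's lemma.
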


\begin{proof}
Preparing to apply Theorem \ref{T:existlocalsoln}, we will think of the unit-hyperbolic-length vector field $E$ as a vector field on the open unit disk in $\mathbb{R}^2$, and now measure length with respect to the Euclidean metric.
Set $t_0 := 0$ and $\mathbf{y_0} := (0,0) = O$.  Next, set $a := 1$, and set $b := 1/2$, so that
$R = \{ |t| \le 1, |\mathbf{y}| \le 1/2 \}$, and define $f(t,\mathbf{y}) := E|_\mathbf{y}$.
By the definition of the hyperbolic metric on $\mathbb{D}$ (see Appendix), we can take $M = 1/2$ (for any choice of $b \in (0,1)$).  Hence, $\epsilon = \min (1, \frac{1}{2}/\frac{1}{2}) = 1$.

The partial derivatives of $f$ with respect to the standard coordinates of $\mathbb{R}^2$ are continuous on the compact set $R$, and hence are bounded, so we conclude that the Lipschitz constant $L$ exists.  By Theorem \ref{T:existlocalsoln}, we have a solution
$\gamma: (-1,1) \rightarrow \mathbb{D} \subset \mathbb{R}^2$.

Now fix an arbitrary point $P \in \mathbb{D}$.  There exists a hyperbolic isometry $\psi: \mathbb{D} \rightarrow \mathbb{D}$ which maps $P$ to $O$ (see equation \eqref{Equ:PtofromO}).
%(there is a unique such isometry such that, in addition, $\psi(O) = -P$)
Use $\psi_*$ to map the vector field $E$ to a ``recentered'' vector field $\tilde{E}$ on $\mathbb{D}$, then apply the previous reasoning to $\tilde{E}$, obtaining an integral curve $\gamma: (-1,1) \rightarrow \mathbb{D}$ such that $\gamma(0) = O$.  The curve $\psi^{-1} \circ \gamma: (-1,1) \rightarrow \mathbb{D}$ will be the desired integral curve through $P$.

Since $E$ is a smooth vector field on the smooth manifold $\mathbb{D}$, and every point of $\mathbb{D}$ can flow forward and backward along $E$ for at least one unit of time, it follows from the Uniform Time Lemma \cite[Lemma 9.15, page 216]{Lee2013} that every integral curve can be extended to an integral curve defined on $(-\infty,\infty)$.
\end{proof}

\begin{cor} \label{C:E1E2flow}
All integral curves of $E_1$ and $E_2$, the unit asymptotic directions, are defined for all time. \qed
\end{cor}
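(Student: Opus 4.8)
The plan is to recognize that this is an immediate application of Proposition \ref{P:globalflow}, so the only work is to check that $E_1$ and $E_2$ satisfy that proposition's two hypotheses: each must be a $C^\infty$ vector field on $\mathbb{D}$, and each must have unit length in the hyperbolic metric.

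First I would address smoothness. By the defining equations \eqref{Equ:defasympt}, $E_1$ and $E_2$ are $\mathbb{R}$-linear combinations of the frame fields $e_1, e_2$ with coefficient functions $\cos\alpha$ and $\pm\sin\alpha$. The fields $e_1, e_2$ were constructed as smooth vector fields on $\mathbb{D}$ in Section \ref{SS:setup}, and $\alpha = \arctan(\kappa_1)$ is a smooth function: the positive principal curvature $\kappa_1$ is smooth because the two principal curvatures are everywhere distinct (one positive, one negative, since the Gaussian curvature is $-1$), so the eigenvalues of the shape operator depend smoothly on the point, and $\arctan$ is smooth on $(0,\infty)$. Hence $\cos\alpha$ and $\sin\alpha$ are smooth, and therefore so are $E_1$ and $E_2$.

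Next I would check unit length. Because $\{e_1, e_2\}$ is orthonormal with respect to the hyperbolic metric, the hyperbolic inner product of $E_1$ with itself is $\cos^2\alpha + \sin^2\alpha = 1$, and the identical computation applies to $E_2$; thus both are unit vector fields in the hyperbolic metric.

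With both hypotheses verified, applying Proposition \ref{P:globalflow} separately to $E_1$ and to $E_2$ shows that every maximal integral curve of each is defined for all $t \in \mathbb{R}$, which is exactly the assertion. I do not anticipate a genuine obstacle here, since the corollary merely repackages the preceding proposition; the one point deserving a moment's care is the smoothness of $\alpha$, which rests on the simplicity of the principal-curvature eigenvalues (guaranteed by $K=-1<0$) established earlier in the argument.
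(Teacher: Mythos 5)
Your proposal is correct and takes the same route as the paper, which states this corollary with an immediate \qed as a direct application of Proposition \ref{P:globalflow}. Your explicit verification of the two hypotheses---smoothness of $E_1,E_2$ (via the smoothness of $e_1,e_2$ and of $\alpha=\arctan(\kappa_1)$, the latter resting on the eigenvalues of the shape operator being everywhere distinct since $K=-1<0$) and unit hyperbolic length (from orthonormality of $\{e_1,e_2\}$ and $\cos^2\alpha+\sin^2\alpha=1$)---is precisely the routine check the paper leaves implicit.
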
 

\begin{prop} \label{P:coords}
There is a $C^\infty$ diffeomorphism $F$ from $\mathbb{D}$ onto 
$\mathbb{R}^2 := \{ (x^1,x^2): x^1,x^2 \in \mathbb{R} \}$, 
such that
\begin{enumerate}
\item $F$ is a (global) coordinate chart for $\mathbb{D}$, and
\item $F_*(E_i) = \frac{\partial}{\partial x^i}$, $i=1,2$.
\end{enumerate}
\end{prop}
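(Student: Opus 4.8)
The plan is to produce the chart $F$ as the inverse of an explicit ``flow map'' $G : \mathbb{R}^2 \to \mathbb{D}$ built from the flows of $E_1$ and $E_2$, and then to verify bijectivity by playing $G$ against a second, globally defined map $\mathbb{D} \to \mathbb{R}^2$ that comes from the exactness of the $\eta^i$. Two preliminary facts make everything run: the fields $E_1,E_2$ are complete (Corollary \ref{C:E1E2flow}), and they commute. For the latter I would apply the intrinsic formula $d\eta^i(E_1,E_2) = E_1(\eta^i(E_2)) - E_2(\eta^i(E_1)) - \eta^i([E_1,E_2])$; since each $\eta^i(E_j) = \delta^i_j$ is constant and $d\eta^i = 0$ by Lemma \ref{L:closed}, this collapses to $\eta^i([E_1,E_2]) = 0$ for $i=1,2$, and as $\{\eta^1,\eta^2\}$ is a coframe this forces $[E_1,E_2] = 0$. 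Consequently the flows $\Phi^1_s$ of $E_1$ and $\Phi^2_t$ of $E_2$ commute.

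Next I would define $G(x^1,x^2) := \Phi^1_{x^1}\bigl(\Phi^2_{x^2}(O)\bigr)$. Completeness guarantees $G$ is defined on all of $\mathbb{R}^2$, and it is smooth because flows of smooth fields are smooth. Differentiating in $x^1$ gives $G_*(\partial/\partial x^1) = E_1$ immediately; differentiating in $x^2$ and using commutativity to rewrite $G(x^1,x^2) = \Phi^2_{x^2}\bigl(\Phi^1_{x^1}(O)\bigr)$ gives $G_*(\partial/\partial x^2) = E_2$. Since $E_1,E_2$ are everywhere linearly independent (here $\alpha \in (0,\pi/2)$, so $\sin 2\alpha \neq 0$), $G$ is a local diffeomorphism.

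In parallel I would build a globally defined map in the other direction. Because each $\eta^i$ is closed (Lemma \ref{L:closed}) and $\mathbb{D}$ is simply connected, $\eta^i = du^i$ for smooth functions $u^i : \mathbb{D} \to \mathbb{R}$, which I normalize by $u^i(O) = 0$. Set $F := (u^1,u^2)$. From $du^i(E_j) = \eta^i(E_j) = \delta^i_j$ we read off $F_*(E_j) = \partial/\partial x^j$, so $F$ too is a local diffeomorphism, and crucially it is defined and continuous on \emph{all} of $\mathbb{D}$. Then $(F\circ G)_*(\partial/\partial x^i) = \partial/\partial x^i$, so $F \circ G$ has derivative the identity everywhere and is therefore a translation of $\mathbb{R}^2$; the normalizations $G(0,0) = O$ and $u^i(O)=0$ pin this translation down to $F \circ G = \mathrm{id}_{\mathbb{R}^2}$. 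In particular $G$ is injective and $F$ is surjective.

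The main obstacle is surjectivity of $G$ (equivalently, injectivity of $F$): one must show the flows actually sweep out all of $\mathbb{D}$. I would argue that the image $A := G(\mathbb{R}^2)$ is both open and closed. Openness is immediate since $G$ is a local diffeomorphism. For closedness, suppose $P_n = G(x_n) \to P$ in $\mathbb{D}$; since $F\circ G = \mathrm{id}$ we have $x_n = F(P_n)$, and continuity of the globally defined $F$ gives $x_n \to F(P)$, whence $P = \lim G(x_n) = G(F(P)) \in A$. Thus $A$ is closed, and since $\mathbb{D}$ is connected and $A \neq \varnothing$, we conclude $A = \mathbb{D}$. Hence $G$ is a bijective local diffeomorphism, so a diffeomorphism, and $F = G^{-1}$ is a global coordinate chart with $F_*(E_i) = \partial/\partial x^i$, as required.
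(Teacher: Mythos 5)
Your proposal is correct, and it takes a genuinely different route from the paper at the one hard step. The construction of $F$ itself is the same (primitives $u^i$ of the closed forms $\eta^i$, which exist by Lemma \ref{L:closed} and simple connectivity, with $F_*(E_j)=\partial/\partial x^j$ forcing a local diffeomorphism), and your flow map $G$ is essentially the paper's Corollary \ref{C:Finverse}; the divergence is in proving bijectivity. The paper shows $F$ is injective by a covering-space argument: it computes the singular values $\sqrt{2}\cos\alpha, \sqrt{2}\sin\alpha$ of the Jacobian \eqref{Equ:GJacob} to get a uniform Lipschitz constant $\sqrt{2}$ for all local inverses, uses this (plus the uniform-continuity extension property from the Appendix) to establish unique path lifting, and then invokes the topological lemma \cite[Proposition 6.12]{L2003} that a local homeomorphism with unique path lifting onto a simply-connected target is a homeomorphism. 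You instead establish $F\circ G=\mathrm{id}_{\mathbb{R}^2}$ (via $[E_1,E_2]=0$, which you derive correctly from $d\eta^i=0$ and $\eta^i(E_j)=\delta^i_j$ --- a fact the paper never makes explicit, though it is implicit once $E_i=\partial/\partial u^i$) and then run a clopen argument: the image of $G$ is open because $G$ is a local diffeomorphism, and closed because the \emph{globally defined} continuous left inverse $F$ converts any convergent sequence $G(x_n)\to P$ into $x_n=F(G(x_n))\to F(P)$, whence $P=G(F(P))$ lies in the image; connectedness of $\mathbb{D}$ finishes. This is a legitimate and elegant shortcut: the existence of a global continuous retraction is exactly what makes the image of a local diffeomorphism closed, and it lets you dispense entirely with the Lipschitz estimate, the metric-completeness extension lemma, and the imported covering-space lemma. (Two small remarks: the flows-commute step could even be avoided, since $F\circ G=\mathrm{id}$ follows directly by tracking $u^1,u^2$ along the two flow segments using $du^i(E_j)=\delta^i_j$, which is how the paper proves surjectivity; and the paper's heavier path-lifting machinery is the more robust tool in settings where no global left inverse is available, whereas here exactness of the $\eta^i$ hands you one for free.)
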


\begin{proof}
Working in $\mathbb{D}$, with $O = (0,0)$, define $F(P) := (\int_O^P \eta^1, \int_O^P \eta^2)$.  $F$ is well-defined, using Green's theorem, since $\eta^1$ and $\eta^2$ are closed and any two paths from $O$ to $P$ bound a region in $\mathbb{D}$.  By definition, the differential of $F$ at $P$ will map $(E_i)_P$ to $(\partial/\partial x^i)_{F(P)}$, for $i=1,2$.  Since $E_1$ and $E_2$ are everywhere linearly independent, the differential is everywhere nonsingular, so $F$ is a local $C^\infty$ diffeomorphism (using a $C^\infty$ version of the Inverse Function theorem; see
\cite[Theorem C.34]{Lee2013}
).

In addition, $F$ is surjective, since in order to find a point in $\mathbb{D}$ which maps to $(a,b) \in \mathbb{R}^2$, it suffices to start at $O$ at time zero, follow an integral curve for $E_1$ (forward if $a>0$, backward if $a<0$) for time $|a|$, then similarly follow an integral curve for $E_2$ for time $|b|$, using Corollary \ref{C:E1E2flow}.  By the definition of $F$, along the first part of the curve, the second coordinate of the output will not change, and vice versa.

With respect to the orthonormal bases $\{ e_1,e_2 \}$ in $\mathbb{D}$ and $\{ \partial/\partial x^1, \partial/\partial x^2 \}$ in $\mathbb{R}^2$, any local inverse for $F$ has Jacobian matrix 
\begin{equation} \label{Equ:GJacob}
M_\alpha := \begin{bmatrix} \cos \alpha & \cos \alpha \\ -\sin \alpha & \sin \alpha \end{bmatrix},
\end{equation}
(using equation \eqref{Equ:defasympt}) and the singular values of this matrix are easily calculated to be $\sqrt{2}\cos \alpha$ and $\sqrt{2}\sin\alpha$.  From this we can conclude that all local inverses are Lipschitz maps with constant $\sqrt{2}$.  Now let $\gamma: [0,1] \rightarrow \mathbb{R}^2$ be a continuous path, and let $Q$ be a point in $\mathbb{D}$ such that $F(Q) = \gamma(0)$.  We claim that $\gamma$ has a unique lift to $Q$; that is, there is a unique path $\tilde{\gamma}: [0,1] \rightarrow \mathbb{D}$ such that $\tilde{\gamma}(0)=Q$ and such that $F \circ \tilde{\gamma} = \gamma$.

Let $\mathcal{T} := \{ t \in [0,1] : \gamma|_{[0,t]} \textrm{ has a unique lift} \}$.  This is an open subset of $[0,1]$, since when $\gamma$ lifts as far as $[0,s]$, a local diffeomorphism from an open subset of $\mathbb{D}$ to an open neighborhood of $\gamma(s)$ will allow the lift to be (uniquely) extended a little further.  It is also true that $\mathcal{T}$ is a closed subset of $[0,1]$, since if $\gamma|_{[0,s)}$ lifts, then $\gamma$ is uniformly continuous on $[0,s) \subset [0,1]$.  Using the Lipschitz bound in the previous paragraph, $\tilde{\gamma}|_{[0,s)}$ is also uniformly continuous, and hence (see Appendix for details) $\tilde{\gamma}$ has a unique extension to $[0,s]$.  Finally, since $[0,1]$ is connected, and $\mathcal{T}$ is a non-empty subset that is both open and closed, we conclude that $\mathcal{T} = [0,1]$, hence $\gamma$ lifts uniquely.

Since $\mathbb{D}$ is path-connected, and $\mathbb{R}^2$ is simply-connected, the following lemma applies to the map $F: \mathbb{D} \rightarrow \mathbb{R}^2$.

\begin{lem} [{\cite[Proposition 6.12]{L2003}}]
Let $f: X \rightarrow Y$ be a local homeomorphism with the unique-path-lifting property.  If $X$ is path-connected and $Y$ is simply-connected, then $f$ is a homeomorphism.
\end{lem}

\begin{proof}
(Sketch) Construct a path between any two points in $X$ that $f$ maps to the same place.  Use $f$ to map the path, continuously deform the mapped path to a constant path, and then lift the homotopy, to show that the initial and final points of the original path are equal.
\end{proof}

Since $F$ is now known to be a homeomorphism (and in particular is injective), and has already been shown to be a local $C^\infty$ diffeomorphism, it follows that $F$ is a $C^\infty$ diffeomorphism.
\end{proof}

\begin{cor} \label{C:Finverse}
Let $G: \mathbb{R}^2 \rightarrow \mathbb{D}$ be defined by mapping the point $(x^1,x^2) \in \mathbb{R}^2$ to the point in $\mathbb{D}$ obtained by starting at $O$, following an integral path for the vector field $E_1$ for time $x^1$, then following an integral path for the vector field $E_2$ for time $x^2$ (moving backward when these numbers are negative).  Then $G = F^{-1}$.
\end{cor}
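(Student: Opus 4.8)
The plan is to show directly that $F \circ G = \mathrm{id}_{\mathbb{R}^2}$; since $F$ is a bijection by Proposition \ref{P:coords}, this forces $G = F^{-1}$. First I would note that $G$ is a well-defined map $\mathbb{R}^2 \to \mathbb{D}$: by Corollary \ref{C:E1E2flow} the integral curves of $E_1$ and $E_2$ through any point exist for all time, so the two-stage flow defining $G(x^1,x^2)$ never runs out of parameter time and never leaves $\mathbb{D}$, for every $(x^1,x^2)$.

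The heart of the argument is a computation of $F(G(x^1,x^2))$ that exploits the path-independence of the line integrals defining $F$ (established in the proof of Proposition \ref{P:coords}). Let $\sigma$ be the concatenated path from $O$ that first follows the integral curve of $E_1$ for parameter time running from $0$ to $x^1$, and then follows the integral curve of $E_2$ for time running from $0$ to $x^2$, ending at $G(x^1,x^2)$. Because $\eta^1,\eta^2$ are closed and $\mathbb{D}$ is simply connected, I may evaluate $F(G(x^1,x^2)) = \bigl(\int_\sigma \eta^1, \int_\sigma \eta^2\bigr)$ along this particular broken path rather than any other.

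On the first leg the tangent vector is $E_1$, and since $\{\eta^1,\eta^2\}$ is dual to $\{E_1,E_2\}$ we have $\eta^1(E_1)=1$ and $\eta^2(E_1)=0$; by the fundamental theorem of calculus the first leg contributes $x^1$ to $\int_\sigma \eta^1$ and $0$ to $\int_\sigma \eta^2$ (this is valid for $x^1$ of either sign, the negative case corresponding to running the flow backward). On the second leg the tangent vector is $E_2$, with $\eta^1(E_2)=0$ and $\eta^2(E_2)=1$, contributing $0$ and $x^2$ respectively. Hence $F(G(x^1,x^2)) = (x^1,x^2)$, i.e. $F \circ G = \mathrm{id}_{\mathbb{R}^2}$. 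Finally, since $F$ is a bijection, composing on the left with $F^{-1}$ yields $G = F^{-1}$, completing the proof.

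I expect no serious obstacle here: the corollary is essentially a repackaging of the surjectivity computation already performed for Proposition \ref{P:coords}, now read as a statement about the explicit inverse. The only point demanding care is the justification that $F$ may be computed along the broken flow path $\sigma$ instead of some other path, and this is exactly where closedness of $\eta^1,\eta^2$ together with simple connectivity of $\mathbb{D}$ is invoked.
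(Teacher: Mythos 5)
Your proposal is correct and matches the paper's proof in essence: the paper likewise establishes $F \circ G = \mathrm{id}_{\mathbb{R}^2}$ (citing ``by construction,'' i.e.\ the same flow-path computation carried out in the surjectivity part of Proposition \ref{P:coords}, which your line-integral argument simply makes explicit) and then uses injectivity of $F$ from Proposition \ref{P:coords} to conclude $G = F^{-1}$. Your explicit evaluation of $\int_\sigma \eta^i$ along the broken path, including the sign discussion, is a fine elaboration of the same route rather than a different one.
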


\begin{proof}
By construction, $F \circ G$ is the identity map on $\mathbb{R}^2$.  Proposition \ref{P:coords} implies that $F$ is injective, so $F \circ I_\mathbb{D} = I_{\mathbb{R}^2} \circ F = (F \circ G) \circ F = F \circ (G \circ F)$ implies $I_\mathbb{D} = G \circ F$.
\end{proof}

Since Proposition \ref{P:coords} has shown that $F$ is a coordinate mapping for $\mathbb{D}$, we can define coordinate functions $u^1, u^2$ on $\mathbb{D}$ such that $u^1 = x^1 \circ F,\ u^2 = x^2 \circ F$ and
\begin{equation} \label{Equ:usandEs}
\frac{\partial}{\partial u^1} = E_1,\ \frac{\partial}{\partial u^2} = E_2;
\textrm{ hence }
du^1 = \eta^1,\ du^2 = \eta^2.
\end{equation}
\par\noindent
(By a common abuse of notation, we let $x^1$ and $x^2$ denote the functions on $\mathbb{R}^2$ that return, respectively, the first and second coordinates of a point.)

\subsection{Area computations} \label{SS:area}

Adding and subtracting the equations in \eqref{Equ:etafromtheta}, we have
\begin{equation} \label{Equ:thetafrometa}
\theta^1 = \cos\alpha (\eta^1 + \eta^2) \textrm{  and  } \theta^2 = \sin\alpha (-\eta^1 + \eta^2).
\end{equation}
\par\noindent
Therefore, using Lemma \ref{L:connalpha},
\begin{align}
\omega_1^2 &= \tan\alpha\, d\alpha(e_2)\cdot \theta^1 + \cot\alpha\, d\alpha(e_1) \cdot \theta^2 \notag\\
&\overset{\eqref{Equ:thetafrometa}}{=} \tan\alpha\, d\alpha(e_2)\cdot \cos\alpha (\eta^1 + \eta^2) + \cot\alpha\, d\alpha(e_1) \cdot \sin\alpha (-\eta^1 + \eta^2) \notag\\
&= \sin\alpha\, d\alpha(e_2) \cdot (\eta^1 + \eta^2) + \cos\alpha\, d\alpha(e_1) \cdot (-\eta^1 + \eta^2) \notag\\
&= (\sin\alpha\, d\alpha(e_2) - \cos\alpha\, d\alpha(e_1)) \eta^1
+ (\sin\alpha\, d\alpha(e_2) + \cos\alpha\, d\alpha(e_1)) \eta^2 \label{Equ:omegarewrite}
\end{align}
\par\noindent
while also (rewrite \eqref{Equ:esfromEs} using \eqref{Equ:usandEs}, apply $d\alpha$ to both sides of each equation, and simplify)
% Details: e_2 \alpha = d\alpha(e_2) = derivative of \alpha along a curve with velocity e_2
% In VectorCalc language, both are grad(\alpha) DOT e_2
% Apply d\alpha to both sides, use linearity on the scalar multiple, and shift trig to the other side
\[
\sin\alpha\, d\alpha(e_2) = \frac{1}{2} \left( -\frac{\partial \alpha}{\partial u^1} + \frac{\partial \alpha}{\partial u^2} \right)
\textrm{  and  }
\cos\alpha\, d\alpha(e_1) = \frac{1}{2} \left( \frac{\partial \alpha}{\partial u^1} + \frac{\partial \alpha}{\partial u^2} \right).
\]

We define $\Theta := 2\alpha$, the angle between $E_1$ and $E_2$ (using the capital letter to avoid confusion with the 1-forms $\theta^1, \theta^2$).  Now \eqref{Equ:omegarewrite} becomes
\begin{equation} \label{Equ:conneta}
\omega_1^2 = -\frac{\partial \alpha}{\partial u^1} \eta^1 + \frac{\partial \alpha}{\partial u^2} \eta^2
= -\frac{1}{2} \frac{\partial \Theta}{\partial u^1} \eta^1 + \frac{1}{2} \frac{\partial \Theta}{\partial u^2} \eta^2,
\end{equation}
\par\noindent
which means that (using \eqref{Equ:etafromtheta} and $\frac{1}{2} \sec \alpha \csc \alpha = \csc \Theta$ to show the final equality)
\begin{align} \label{Equ:dconnfirst}
d\omega_1^2 &= -\frac{1}{2} \frac{\partial^2 \Theta}{\partial u^2 \partial u^1} \eta^2 \wedge \eta^1
+ \frac{1}{2} \frac{\partial^2 \Theta}{\partial u^1 \partial u^2} \eta^1 \wedge \eta^2
=  \frac{\partial^2 \Theta}{\partial u^1 \partial u^2} (\eta^1 \wedge \eta^2) \notag\\
&= \csc\Theta \frac{\partial^2 \Theta}{\partial u^1 \partial u^2} (\theta^1 \wedge \theta^2),
\end{align}
\par\noindent
where $\theta^1 \wedge \theta^2$ represents the ``area form'' $dA$ on $\mathbb{D}$ (since $\{e_1, e_2 \}$ is orthonormal).
At the same time,
\begin{equation} \label{Equ:dconnsecond}
d\omega_1^2  \overset{\eqref{Equ:dconn}}{=} -\omega_1^3 \wedge \omega_2^3
\overset{\eqref{Equ:princurv}}{=} -(\kappa_1 \theta^1) \wedge (\kappa_2 \theta^2) 
= -\kappa_1 \kappa_2 (\theta^1 \wedge \theta^2)
= \theta^1 \wedge \theta^2.
\end{equation}
\par\noindent
Comparing equations \eqref{Equ:dconnfirst} and \eqref{Equ:dconnsecond},
we have the PDE 
\begin{equation} \label{Equ:KeyPDEinD}
\frac{\partial^2 \Theta}{\partial u^1 \partial u^2} = \sin\Theta,
\end{equation}
\par\noindent
on $\mathbb{D}$, which we can transfer to the $x^1x^2$-plane, after defining $\hat{\Theta} := \Theta \circ F^{-1}$, as
\begin{equation} \label{Equ:KeyPDEinR2}
\frac{\partial^2 \hat{\Theta}}{\partial x^1 \partial x^2} = \sin \hat{\Theta}.
\end{equation}

By Proposition \ref{P:coords} and the definition of $\Theta$, $\det(M_\alpha) \overset{\eqref{Equ:GJacob}}{=}  \sin(2\alpha) = \sin\Theta$ is the factor by which area is distorted, when $F^{-1}$ maps the $x^1x^2$-plane to $\mathbb{D}$.
%(or when $\phi \circ F^{-1}$ maps the $x^1x^2$-plane to the surface $\phi(\mathbb{D})$)
Hence, the area $A$ of the hyperbolic plane is given by
 \begin{align} \label{Equ:finitearea}
 A &= \lim_{a \to \infty} \int_{-a}^a \int_{-a}^a \sin \hat{\Theta}\, dx^1\, dx^2 %\notag\\&
= \lim_{a \to \infty} \int_{-a}^a \int_{-a}^a \frac{\partial^2 \hat{\Theta}}{\partial x^1 \partial x^2}\, dx^1\, dx^2 \notag\\
& = \int_{-a}^a \left[\frac{\partial \hat{\Theta}}{\partial x^2}\right]_{x^1=-a}^{x^1=a} \, dx^2 %\notag\\&
= \int_{-a}^a [\frac{\partial \hat{\Theta}}{\partial x^2} (a,x^2) - \frac{\partial \hat{\Theta}}{\partial x^2} (-a,x^2)] \, dx^2 \notag\\
& =  \lim_{a \to \infty} [\hat{\Theta}(a,a) - \hat{\Theta}(-a,a) - \hat{\Theta}(a,-a) + \hat{\Theta}(-a,-a)],
 \end{align}
\par\noindent
and since $0 < \Theta < \pi$, $A$ is bounded above by $2\pi$.  On the other hand, the hyperbolic plane has infinite area, as computed below (in the Poincar\'e model; see the Appendix for the Riemannian metric):
\begin{align} \label{Equ:infinitearea}
\lim_{t \to 1^-} \int_0^{2\pi} \int_0^t \frac{4}{(1-r^2)^2} \, r \, dr \, d\theta &=
\lim_{t \to 1^-} \int_0^{2\pi} \left[\frac{2}{(1-r^2)} \right]_0^t  \, d\theta  \notag\\
&=  2\pi \cdot  \lim_{t \to 1^-} \left( \frac{2}{1-t^2} - 2 \right) = \infty 
\end{align}
% for first integral, square the length-distortion factor (which is already squared in, say, g_{11})
\par\noindent
Assuming the existence of $\phi$ has led to a contradiction, hence there does not exist an isometric immersion of $\mathbb{D}$ into $\mathbb{R}^3$, concluding the proof of Theorem \ref{T:Hilbert}.
\end{proof} %END

\begin{cor} \label{C:completesurface}
If $S$ is a $C^\infty$ Riemannian $2$-manifold for which the metric is complete and has constant curvature $-1$, then there does not exist a $C^\infty$ isometric immersion of $S$ into $\mathbb{R}^3$.
\end{cor}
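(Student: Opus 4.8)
The plan is to reduce the corollary to Theorem \ref{T:Hilbert} by passing to the universal cover, where the hypotheses force the intrinsic geometry to be exactly that of $\mathbb{D}$. I may assume $S$ is connected, since otherwise I would simply run the argument on a single connected component (an isometric immersion restricts to an isometric immersion on each component). First I would form the universal covering map $\pi: \tilde{S} \to S$ and equip $\tilde{S}$ with the pullback metric $\pi^* g$, which makes $\pi$ a local isometry. Then $\tilde{S}$ is simply connected by construction; it has constant curvature $-1$ because local isometries preserve Gaussian curvature; and its metric is complete, since a Riemannian covering of a complete manifold is complete (geodesics downstairs are defined for all time and lift to geodesics upstairs).

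The one substantial ingredient beyond Theorem \ref{T:Hilbert} is the Killing--Hopf uniqueness theorem for space forms: any complete, simply-connected Riemannian $2$-manifold of constant curvature $-1$ is isometric to the hyperbolic plane $\mathbb{H}^2$, realized here as the Poincar\'e disk $\mathbb{D}$ of the Appendix. Invoking it for $\tilde{S}$ yields an isometry $\Lambda: \mathbb{D} \to \tilde{S}$.

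I would then argue by contradiction. Supposing a $C^\infty$ isometric immersion $\phi: S \to \mathbb{R}^3$ exists, the composite $\phi \circ \pi \circ \Lambda: \mathbb{D} \to \mathbb{R}^3$ is again a $C^\infty$ isometric immersion: it is smooth as a composition of smooth maps; it is an immersion because $\Lambda$ is a diffeomorphism, $\pi$ is a local diffeomorphism, and $\phi$ is an immersion; and it is isometric because each of the three factors preserves lengths infinitesimally. This contradicts Theorem \ref{T:Hilbert}, so no such $\phi$ can exist.

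The hard part is really the invocation of Killing--Hopf, which supplies the isometry $\mathbb{D} \cong \tilde{S}$ and is the only piece of nontrivial Riemannian geometry not already developed in the body of the paper; by contrast, the completeness of the cover and the verification that the triple composite is an isometric immersion are routine.
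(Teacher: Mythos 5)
Your proposal is correct and matches the paper's own proof, which likewise passes to the universal cover $\tilde{S}$, invokes the Killing--Hopf classification of space forms (cited there as \cite[Corollary 2.3.17]{W1977}) to identify $\tilde{S}$ with $\mathbb{H}^2$, and derives a contradiction with Theorem \ref{T:Hilbert} from the composite isometric immersion $\tilde{S} \rightarrow S \rightarrow \mathbb{R}^3$. You merely spell out the routine verifications (completeness of the cover, the composite being an isometric immersion) that the paper leaves implicit.
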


\begin{proof}
By \cite[Corollary 2.3.17]{W1977}, the universal cover, $\tilde{S}$, of any such $S$ would be isometric to $\mathbb{H}^2$, and the composition $\tilde{S} \rightarrow S \rightarrow \mathbb{R}^3$ would be a counterexample to Theorem \ref{T:Hilbert}.
\end{proof}

\begin{cor} \label{C:negcurv}
If $S$ is a $C^\infty$ Riemannian $2$-manifold for which the metric is complete and has constant curvature $-k^2$ ($k > 0$), then there does not exist a $C^\infty$ isometric immersion of $S$ into $\mathbb{R}^3$.
\end{cor}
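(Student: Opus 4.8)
The plan is to reduce to the constant-curvature-$-1$ case already settled by Corollary \ref{C:completesurface}, by rescaling the metric on $S$ and matching that rescaling with a dilation of $\mathbb{R}^3$. Suppose, for contradiction, that $\phi : S \to \mathbb{R}^3$ is a $C^\infty$ isometric immersion, and write $g$ for the given complete metric of constant curvature $-k^2$.

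First I would rescale the metric on $S$. Define $\tilde{g} := k^2 g$. Multiplying a Riemannian metric by a positive constant $\lambda$ multiplies all lengths by $\sqrt{\lambda}$ and divides the Gaussian curvature by $\lambda$; taking $\lambda = k^2$ therefore gives $(S,\tilde{g})$ constant curvature $-k^2/k^2 = -1$. Completeness is preserved, since distances measured in $\tilde{g}$ are exactly $k$ times distances measured in $g$, so the Cauchy sequences (and the closed-and-bounded sets) are the same for both metrics. Thus $(S,\tilde{g})$ is a complete $C^\infty$ surface of constant curvature $-1$.

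Next I would manufacture, from $\phi$, an isometric immersion of $(S,\tilde{g})$ into $\mathbb{R}^3$. Let $D_k : \mathbb{R}^3 \to \mathbb{R}^3$ be the dilation $p \mapsto k\,p$. Its differential scales the Euclidean inner product by $k^2$, i.e.\ $D_k^{*}\langle\,\cdot\,,\,\cdot\,\rangle = k^2 \langle\,\cdot\,,\,\cdot\,\rangle$. Composing and using that $\phi$ is isometric for $g$, I would compute $(D_k \circ \phi)^{*}\langle\,\cdot\,,\,\cdot\,\rangle = \phi^{*}\bigl(k^2\langle\,\cdot\,,\,\cdot\,\rangle\bigr) = k^2\,\phi^{*}\langle\,\cdot\,,\,\cdot\,\rangle = k^2 g = \tilde{g}$. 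Hence $D_k \circ \phi$ is a $C^\infty$ isometric immersion of $(S,\tilde{g})$ into Euclidean $3$-space.

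This contradicts Corollary \ref{C:completesurface}, since $(S,\tilde{g})$ is a complete surface of constant curvature $-1$; therefore no such $\phi$ can exist. There is no genuinely hard step here. The only thing to be careful about is keeping the two scalings consistent, so that curvature $-k^2$ and dilation factor $k$ match (lengths scale by $k$, curvature by $1/k^2$); once those conventions are pinned down, the argument is immediate.
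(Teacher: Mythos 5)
Your proof is correct and takes essentially the same approach as the paper: both rescale the metric on $S$ by $k^2$ (preserving completeness and producing constant curvature $-1$) and compose $\phi$ with the dilation $p \mapsto kp$ to obtain an isometric immersion contradicting Corollary \ref{C:completesurface}. Your explicit pullback computation $(D_k \circ \phi)^{*}\langle\,\cdot\,,\,\cdot\,\rangle = k^2 g = \tilde{g}$ simply spells out what the paper phrases extrinsically, via normal curvatures being divided by $k$.
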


\begin{proof}
Suppose that $\psi: S \rightarrow \mathbb{R}^3$ is such an isometric immersion, and let $h: \mathbb{R}^3 \rightarrow \mathbb{R}^3$ be the dilation $(x,y,z) \mapsto (kx,ky,kz)$.  Since the effect of the dilation is to divide all normal curvatures by $k$, the image of the composition $h \circ \psi$ has constant curvature $-1$ everywhere, which contradicts Corollary \ref{C:completesurface} (if the original inner product at every point on $S$ is multiplied by $k^2$, the new Riemannian metric will still be complete, and $h \circ \psi$ will be an isometric immersion).
\end{proof}

\section{Brief History} \label{Sec:history}

Hilbert's original proof 
\cite{Hi1900}
assumed that the surface in $\mathbb{R}^3$ was real-analytic (and had constant negative curvature $-1$).  However, the outline of his argument will work if $\phi$ is smooth enough to ensure that the PDE in equation
\eqref{Equ:KeyPDEinD} makes sense.
Tracing back the definition of $\Theta$, we can see that if
$\phi \in C^4(\mathbb{D})$,
then the normal vector $\bar{e}_3$ is $C^3$,
and the eigenvalues $\bar{\kappa_1}, \bar{\kappa}_2$ of $-d\bar{e}_3$ are $C^2$,
as is $\Theta = 2 \arctan(\kappa_1)$.
So $\partial^2 \Theta / \partial u^1 \partial u^2$ exists and is continuous
on $\mathbb{D}$.
With more sophisticated techniques, Efimov proved that there is no $C^2$ isometric immersion of the hyperbolic plane; see 
\cite{TKM1972} 
for an exposition and more history.
On the other hand, by work of Nash and Kuiper,
there \emph{does} exist a $C^1$ isometric embedding of the hyperbolic plane in $\mathbb{R}^3$
\cite{N1954} \cite{K1955}.

\section{Appendix: Metric properties of $\mathbb{H}^2$} \label{Sec:appx}

Let $O := (0,0) \in \mathbb{D} := \{ (x,y) \in \mathbb{R}^2 : x^2+y^2<1 \}$.  
$\mathbb{D}$ is the Poincaré disk model for $\mathbb{H}^2$, with Riemannian
metric $g_{11} = g_{22} = 4/(1- x^2 - y^2)^2,\  g_{12} = g_{21} = 0$
\cite[page 179]{MP1977},
\cite[Theorem 4.5.5]{R1994}.
$\mathbb{D}$ inherits an orientation from the usual orientation on $\mathbb{R}^2$.
Furthermore, the corresponding distance function on $\mathbb{D}$ is
\cite[Theorem 4.5.1]{R1994}

\[
d_{\mathbb{D}} (P,Q) := \arccosh \left( 1 + \frac{2|P-Q|^2}{(1-|P|^2)(1-|Q|^2 )} \right)
\]
\par\noindent
where $P,Q \in \mathbb{D}$ and $|P|$ denotes the Euclidean length of the vector $P$.

Proofs of the following properties can be found in many textbooks, including \cite{R1994}.

\begin{enumerate}
\item The metrics $g$ and $d_{\mathbb{D}}$ are invariant under M\"obius transformations which leave $\mathbb{D}$ invariant.
\item The $d_{\mathbb{D}}$-metric topology on $\mathbb{D}$ equals the topology that the $\mathbb{D}$ inherits as a subset of $\mathbb{R}^2$ (with the usual topology).
\item Every open $d_\mathbb{D}$-ball centered at a point in $\mathbb{D}$ equals (as a set) an open Euclidean disk whose closure is contained in $\mathbb{D}$.
\item The metric space $(\mathbb{D},d_{\mathbb{D}})$ is complete
\cite[Theorem 8.5.1]{R1994}.
\item After identifying $(x,y)$ with the complex number $z = x+iy$, the maps
\begin{equation} \label{Equ:PtofromO}
z \mapsto \frac{z - P}{-\bar{P}z + 1}, \ 
z \mapsto \frac{z + P}{\bar{P}z + 1}
\end{equation}
\par\noindent
are isometries of the Riemannian manifold 
$(\mathbb{D},g)$ and inverses of each other,
 taking $P \in \mathbb{D}$ to $O$ and $O$ to $P$, respectively.
\cite[Section 4.5, Exercise 10]{R1994}
% all coeff need to be divided by the positive sqrt of 0 < 1 - P\bar{P} < 1 (when P \ne O)
\cite[3 IX \& 6 III 11]{Ne1997}.
% Take p=1, q=-P in Chapter 3, Exercise 20, or \theta=\pi, a = P, in e^{i\theta} (z-a)/(\bar{a}z-1)
\end{enumerate}

Furthermore, if $r>0$ and $\gamma: [0,r) \rightarrow \mathbb{D}$ is uniformly continuous, then there is a unique extension of $\gamma$ to a (uniformly) continuous map $\bar{\gamma}: [0,r] \rightarrow \mathbb{D}$.
This is a very special case of an exercise in \cite[Chapter 7]{M2000}.  The proof hinges on the fact that every sequence in $[0,r)$ converging to $r$ is a Cauchy sequence and will be mapped by $\gamma$ to a Cauchy sequence in $\mathbb{D}$.
%*****Munkres Exercise
\begin{comment}
In the international 2nd edition, page 270, Exercise 2
Let (X,d_X) and (Y,d_Y) be metric spaces; let Y be complete.  Let A \subset X.  Show that if f: A \rightarrow Y is uniformly continuous, then f can be uniquely extended to a continuous function g : \bar{A} \rightarrow Y, and g is uniformly continuous.
\end{comment}
%*****
\newpage

% BIBLIOGRAPHY (uses BibTeX, see .bib, .bbl, .blg files)
%
% Compiling requires 4 steps: TeX, BibTeX, TeX, TeX
% See Math into LaTeX, Chapter 10, for details
%
\bibliographystyle{amsalpha}
\bibliography{Hilbert-A1}
% Banchoff&Lovett1st is BL2010 (MR2674651)
% Coddington is C1961 (MR0126573)
% doCarmoDGCS is doC2016 (MR3837152)
% doCarmoFDeA is doC1983 (MR752287)
% doCarmoDF&A is doC1994 (MR1301070) SR Library: QA 381 .C2813 1994
% Hilbert is Hi1900 (MR1500557)
% Hurewicz is H1958 (MR0090703)
% Kuiper is K1955 (MR75640)
% Lee is Lee2013 (MR2954043)
% Lima is L2003 (MR2000701)
% Millman&Parker is MP1977 (MR0442832)
% TKMilnor is TKM1972 (MR301679)
% Munkres is M2000 (MR3728284)
% Nash is N1954 (MR65993)
% Needham is Ne1997 (MR1446490)
% O'Neill is ON2006 (MR2351345)
% Ratcliffe is R1994 (MR1299730) 
% Shifrin manuscript is Sh2018, not MR-reviewed
% SpivakVol1Ed2 is SI1979 (MR532830)
% SpivakVol2Ed2 is SII1979 (MR532831)
% Wolf is W1977 (MR928600 for 5th edition)

\end{document}